\newcommand{\C}{\mathcal C}
\newcommand{\D}{\mathcal D}
\newcommand{\ot}{{\otimes}}
\newcommand{\pl}{{\oplus}}
\newcommand{\op}{\mathsf{op}}
\newcommand{\cop}{\mathsf{cop}}
\newcommand{\e}{\mathsf{e}}
\newcommand{\db}{\mathsf{d}}
\newcommand{\ka}{\kappa}
\newcommand{\la}{\lambda}
\newcommand{\p}{{\cdot}}
\newcommand{\nrr}{\nu^r_R}
\newcommand{\nlr}{\nu^l_R}
\newcommand{\nrl}{\nu^r_L}
\newcommand{\nll}{\nu^l_L}
\newcommand{\I}{\mathbbm 1}
\newtheorem{theorem}{Theorem}
\newtheorem{definition}[theorem]{Definition}
\newtheorem{proposition}[theorem]{Proposition}
\newtheorem{lemma}[theorem]{Lemma}
\newtheorem{remark}[theorem]{Remark}
\newtheorem{example}[theorem]{Example}
\begin{document}

\title{A note on Frobenius monoidal functors on autonomous categories}

\author{Adriana Balan
\thanks{This work was supported by a grant of the Romanian National Authority for Scientific Research, CNCS--UEFISCDI, project number PN-II-RU-TE--2012--3--0168.} 
\thanks{Simion Stoilow Institute of Mathematics of the Romanian Academy, Research group of the project TE--3--0168, P.O. Box 1-764, RO-014700, Bucharest, Romania.}
\thanks{University Politehnica of Bucharest, Romania, adriana.balan@mathem.pub.ro}
}
\maketitle
%========================================================%

\begin{abstract}

Frobenius monoidal functors preserve duals. We show that conversely, (co)monoidal functors between autonomous categories which preserve duals are Frobenius monoidal. We apply this result to linearly distributive functors between autonomous categories. 

\end{abstract}

%========================================================%

\section{Introduction}

There is an old result, going back to~\cite{js:braided}, %maybe earlier
saying that strong monoidal functors preserve duals. However, not all (co)monoidal functors preserving duals are necessarily strong; 
examples being the Frobenius monoidal functors introduced in~\cite{day-pastro:Frob}. These are simultaneously monoidal and comonoidal functors, subject to certain coherence conditions. 

The notion of an autonomous functor between left/right autonomous categories, introduced in~\cite{dms-da}, formalizes the property of preserving duals. Explicitly, a (co)monoidal functor $F$ between (left) autonomous categories is autonomous if there is a natural isomorphism $FS \cong SF$ satisfying two coherence conditions, where $S$ denotes the contravariant functor taking (left) duals. Frobenius monoidal functors between autonomous categories are autonomous~\cite{day-pastro:Frob}. In this paper we show that conversely, (co)monoidal functors between autonomous categories carry also a monoidal, respectively comonoidal structure making them Frobenius monoidal. 

\bigskip

Frobenius monoidal functors arise as a (degenerate) specialization of linear functors between linearly distributive categories, to the case where both domain and codomain categories have equal tensor products -- that is, are monoidal categories.\footnote{Note however that in~\cite{egger:star} the notion of a Frobenius monoidal functor is considered between genuine linearly distributive categories, while in~\cite{blute:deep} the same appears under the name of degenerate linear functor.} More in detail, a Frobenius monoidal functor produces a linear functor with \emph{equal} components. 

It appears natural to ask to what extent a linear functor between monoidal categories is induced by a Frobenius monoidal functor as above. However, imposing an equality rather than an isomorphism is usually considered \emph{evil} in category theory, thus the question above should be rephrased as which are the necessary coherence conditions ensuring that the components of a linearly distributive functor are isomorphic if and only if one of them (or both) is  Frobenius monoidal.

Cockett and Seely's characterization of linear functors in presence of dualities (also called negations)~\cite{cockett-seely:lf}, enhanced by the equivalence Frobenius monoidal functor -- autonomous functor, will allow us to show that for a linear functor between autonomous categories, one of the components being Frobenius monoidal/autonomous forces also the other to be so, and that this is equivalent to the existence of a monoidal-comonoidal isomorphism between them, or an isomorphism compatible with the linear structure. 

However, for general monoidal categories, the situation can be completely arbitrary: we provide a simple example of a linear functor having only one component Frobenius monoidal, and another example with both components Frobenius monoidal, but non-isomorphic (Example~\ref{ex:lin}). 

\bigskip

The paper is organized as follows: Section~\ref{sect:prelim} is dedicated to a review of the notions of monoidal categories, monoidal functors and linear functors, autonomous categories. Subsequently, the main Subsection~\ref{subsect:aut funct} focuses on autonomous and Frobenius monoidal functors and culminates with their equivalence in Theorem~\ref{thm:aut->frob}; see also Theorem~\ref{cor:frob} for several equivalent characterizations of such functors. The last subsection provides an application to linear functors between autonomous categories. 

Appendix \ref{app:prelim} expands Section~\ref{sect:prelim} with more details, to be used in Appendix~\ref{app:proofs} for the diagrammatic proofs. These being quite large, were not included in the main body of the paper. 

%========================================================%

\section{Monoidal and linear functors}\label{sect:prelim}

In this section we briefly recall the main notions needed in the sequel. More details can be found in Appendix~\ref{app:prelim}.

%========================================================%

\subsection{Monoidal categories and functors}

All monoidal categories will have tensor product denoted $\ot$ and unit $\I$.  If $\C$ is a monoidal category, the reversed tensor product $X\ot^{\mathsf{rev}} Y=Y\ot X$ determines another monoidal structure on $\C$, that we shall denote by $\C^\cop$. %\footnote{Reminiscent of Hopf algebras.}~
The opposite category also becomes monoidal, with either the original monoidal product $\ot$, in which case we refer to it as $\C^\op$, or with the reversed monoidal product $\ot^{\mathsf{rev}}$. We shall then use the notation $\C^{\op, \cop}$. All the above mentioned monoidal categories have the same unit object ${\I}$. For more details on monoidal categories, we refer to~\cite{maclane}. 

\medskip

We shall in the sequel omit the associativity and unit constraints, writing as the monoidal categories would be strict \cite{maclane}. The identity morphism will be always denoted by $1$, the carrier being obvious from the context. Also, we shall not use labels on (di)natural transformations to avoid notational overcharge.

\medskip

Because many of the proofs rely on commutative diagrams, to increase their readability, we shall label diagrams by $(N)$ if these commute by naturality, and by $(M)$ if the commutativity is due to monoidal functoriality, as in $(1\ot g) (f\ot 1) = (f\ot 1)(1\ot g)$. Otherwise, we shall refer to previously labeled relations. 
 
\bigskip
 
For a monoidal functor $F:\C\to \D$ between monoidal categories, we shall employ small letters to denote the structural morphisms $f_2:FX \ot FY \to F(X\ot Y)$ and $f_0:{\I}\to F{\I}$, while in case $F$ is comonoidal we shall write $F_2:F(X\ot Y) \to FX \ot FY$ and $F_0:F{\I} \to {\I}$ (with capital letters to emphasize the difference). 

\medskip

There are well-known notions of monoidal and comonoidal natural transformations. Less encountered in the literature, but needed in this paper,  are natural transformations between functors with different monoidal orientation.  First, recall from~\cite[Section 2.3]{grandis-pare} the double category of monoidal categories, having as horizontal arrows the monoidal functors and as vertical arrows the comonoidal ones. A square
\[
\xymatrix{\cdot \drtwocell<\omit>{\alpha} \ar[r]^{F} \ar[d]_{G} & \cdot \ar[d]^{K} \\ \cdot \ar[r]_{H} & \cdot } 
\]
is then given by a natural transformation $\alpha: K F  \to HG$, subject to the coherence conditions 
\begin{equation*}
%:double
\vcenter{
\xymatrix@C=10pt@R=15pt{ & K(FX \ot FY) \ar[dl]_{Kf_2} \ar[dr]^{K_2}  & & & K{\I} \ar[dl]_{Kf_0} \ar[dr]^{K_0} &  \\
KF(X \ot Y) \ar[d]_{\alpha} && KFX \ot KFY \ar[d]^{\alpha \ot \alpha} & KF{\I} \ar[d]_{\alpha} && {\I} \ar@{=}[d] \\
HG(X \ot Y) \ar[dr]_{HG_2} && HGX \ot HGY \ar[dl]^{h_2} & HG{\I} \ar[dr]_{HG_0} && {\I} \ar[dl]^{h_0} \\
& H(GX \ot GY) & & & H{\I} }
}
\end{equation*}  %\cite{shulman}

In case both $K$ and $H$ are the identity functors, we call such $\alpha:F\to G$ a \emph{monoidal-comonoidal natural transformation}.\footnote{Some authors call such natural transformations lax/colax (monoidal).} For the sake of completeness, we spell out explicitly that $\alpha:F \to G$ should satisfy \begin{align}
%:lax-colax
\label{lax-colax} & \vcenter{
\xymatrix{FX \ot FY \ar[d]_{f_2} \ar[r]^{\alpha \ot \alpha} & GX \ot GY \\
F(X \ot Y) \ar[r]^\alpha & G(X \ot Y) \ar[u]_{G_2}
}} & \vcenter{
\xymatrix{
{\I} \ar@{=}[r] \ar[d]_{f_0} & {\I} \\
F{\I} \ar[r]^{\alpha} & G{\I} \ar[u]_{G_0}
}}
\end{align}
By taking instead $F$ and $G$ to be identity, the resulting natural transformation $\alpha:K \to H$ will be called \emph{comonoidal-monoidal}. 

\medskip

%========================================================%

%:FMF
Finally, we recall from~\cite{day-pastro:Frob} % \label{def:FMF}
 that a functor $F:\C \to \D$ between monoidal categories is called \emph{Frobenius monoidal} if:  
\begin{itemize}
\item It is a monoidal functor, with 
\[ f_{2}:FX\ot FY \to F(X \ot Y)\ , \ f_0:{\I}\to F{\I} 
\]
\item It is a comonoidal functor, with 
\[
F_{2}:F(X\ot Y) \to FX \ot FY\ , \ F_0:F{\I}\to {\I}
\]
\end{itemize}
and satisfies the compatibility conditions expressed in the diagrams below: 

\begin{align}
\vcenter{\xymatrix{
FX \ot F(Y \ot Z) \ar[d]_{f_2} \ar[r]^{1\ot F_2} & FX \ot FY \ot FZ \ar[d]^{f_2\ot 1}\\
F(X\ot Y\ot Z) \ar[r]^{F_2} & F(X\ot Y)\ot FZ}}\label{eq2:Frob} \\
\vcenter{\xymatrix{
F(X \ot Y) \ot FZ \ar[d]_{f_2} \ar[r]^{F_2\ot 1} & FX \ot FY \ot FZ \ar[d]^{1\ot f_2} \\F(X\ot Y\ot Z) \ar[r]^{F_2} & FX\ot F(Y\ot Z) 
} } \label{eq1:Frob}
\end{align}

The simplest example of a Frobenius monoidal functor is a strong monoidal one, in which case the structural monoidal/comonoidal morphisms are inverses to each other. 

As for the properties of Frobenius monoidal functors, we mention that such functors preserve duals~\cite{day-pastro:Frob}. To this point we shall come back later (Remark~\ref{FMF pres dual}) and we shall see that the converse also holds, in the sense that (co)monoidal functors on \emph{autonomous} categories which coherently preserve duals (call them autonomous functors) are Frobenius monoidal (Theorems~\ref{thm:aut->frob} and~\ref{cor:frob}).

%========================================================%

\subsection{Linear functors between monoidal categories} \label{sect:lin funct mon cat}

We shall in the sequel encounter pairs of functors between monoidal categories, one of them being monoidal and the other comonoidal, subject to several coherence conditions. Such a pair has been called a linearly distributive functor and it makes sense in a more general context than monoidal categories, namely linearly distributive categories.\footnote{These have been introduced by Cockett and Seely in \cite{cockett-seely:wdc:short} and \cite{cockett-seely:wdc} as to provide a categorical settings for linear logic.}~In short, a linearly distributive category is a category $\C$ equipped with two monoidal structures $(\C,\ot,{\I})$ and $(\C,\pl,\mathbb 0)$, 
and two natural transformations $A\ot (B\pl C)\to (A\ot B)\pl C$, $ 
(A\pl B)\ot C \to A\pl(B\ot C)$, subject to several naturality coherence conditions that make the monoidal structures work together \cite{cockett-seely:wdc}. Any monoidal category $(\C, \ot, {\I})$ is a (degenerate) linearly distributive category, with $\ot=\pl$ and ${\I}=\mathbb 0$. This is our case of interest, that we shall pursue from now on in the sequel. 

Linearly distributive functors (in short, linear functors) between linearly distributive categories were defined in \cite{cockett-seely:lf}.\footnote{To not be confounded with another notion of linear functor, namely an (enriched) functor between categories enriched over vector spaces.} However, for our purposes, it will be enough to only consider linear functors between degenerate linearly distributive categories (monoidal categories). Thus, a \emph{linear functor} consists of a pair of functors $R, L:\C\to \D$ between monoidal categories $\C$ and $\D$, such that $R$ is monoidal and $L$ is comonoidal, with structure maps 
\begin{align*} 
r_{2}:R X \ot R Y \to R (X\ot Y), \qquad & r_0:{\I}\to R {\I}\\
L_{2}:L(X \ot  Y) \to L X \ot L Y, \qquad & L_0:L {\I}\to {\I}
\end{align*} 
such that there are four natural transformations, called (co)strengths
\begin{align*} 
&\nu_R^r:R (X\ot Y)\to L X\ot R Y,  \qquad \nu_R^l:R (X\ot Y)\to R X \ot L Y \\
&\nu_L^r:R X \ot L Y \to L (X\ot Y), \qquad \nu_L^l: L X \ot R Y \to L (X\ot Y) 
\end{align*} 
expressing how $R$ and $L$ (co)act on each other, subject to the several coherence conditions \cite{cockett-seely:lf} that can be found in~\ref{linear}.

%]{Appendix}. 

\begin{example}

From any strong monoidal functor $(U,u_2,u_0):\C\to \D$ between monoidal categories, one can obtain a linear functor by setting $R=L=U$, with (co)strengths given by $\nu_R^l=\nu_R^r=u_2^{-1}$, $\nu_L^l=\nu_L^r=u_2$. More generally, any Frobenius monoidal functor $(F, f_2, f_0, F_2, F_0):\C \to \D$ provides a linear functor $(R, L)$ with equal components $R=L=F$, such that $\nrr=\nlr=L_2=F_2$ and $\nlr=\nll=r_2=f_2$ \cite{egger}. In fact, one can easily see that the converse also holds: any linear functor $(R, L)$ with equal components $R=L$, such that $\nrr=\nlr=L_2$ and $\nlr=\nll=r_2$, induces a Frobenius monoidal functor. 

\end{example}

%degenerate linear functor Appl Categor Struct Deep Inference and Probabilistic Coherence Spaces Richard Blute á Prakash Panangaden á Sergey Slavnov -- only the functor is degenerate, not also the LDC structure on base categories. If both the structure and the functor are degenerate, get Frobenius monoidal functor. 
In light of the above example, it appears natural to ask whether a converse of the above holds, in the following sense: for $(R,L)$ is an arbitrary linear functor between monoidal categories, if one of the components, say $R$, is Frobenius monoidal, does it follow that the other component $L$ is also Frobenius monoidal? and does an isomorphism between them exist (maybe subject to several coherence conditions)? 

This is not true in general, and we provide below an example. However, in case both categories involved are autonomous, we shall see in the last part of the paper that such a result does hold.

\begin{example}\label{ex:lin}

Consider the posetal category $(\mathbb N, \leq)$ of natural numbers with the usual order. This is a (strict) monoidal category under addition, with zero as unit. 

A monoidal functor $R:\mathbb N \to \mathbb N$ is a monotone and subadditive function, in the sense that $0\leq R(0)$ (obviously true!) and $R(m)+R(n) \leq R(m+n)$, for all $m,n\in \mathbb N$. A strong monoidal functor is a monotone morphism of monoids. Notice that a Frobenius monoidal functor between partially ordered monoids is the same as a strong monoidal functor. For our example, consider $R$ to be the constant mapping to zero.

A comonoidal functor $L:\mathbb N \to \mathbb N$ is again a monotone function, but satisfying the reversed inequalities $L(0)\leq 0$ (of course, this implies $L(0)=0$) and $L(m+n)\leq L(m)+L(n)$ for all $m,n\in \mathbb N$. Take $L$ to be the modified successor function, $L(0)=0$ and $L(n)=n+1$ for $n\neq 0$. 

The inequalities $R(m+n)\leq R(m)+L(n)\leq L(m+n)$ and $R(m+n)\leq L(m)+R(n) \leq L(m+n)$, which hold for all $m,n\in \mathbb N$, play the role of (co)strengths and ensure that the chosen pair $(R, L)$ is a linear functor on the (posetal) monoidal category of natural numbers. Notice that $R$ is Frobenius monoidal, while $L$ is only comonoidal. 

Notice that if we take instead $L$ to be the identity function, we still obtain a linear functor $(R,L)$, this time with the (non-isomorphic!) components being both Frobenius monoidal. 

\end{example}

%========================================================%

\subsection{Autonomous categories}\label{sec:autonomous}

We quickly review below the basics on autonomous categories; more details can be found in the~\ref{app:aut} and in the references~\cite{kelly:many,kelly-laplaza:coherence,js:braided,freyd-yetter:coherence}.

\bigskip

A \emph{left dual} of an object $X$ in a monoidal category $\C$ consists of another object $SX$,\footnote{Left duals are also denoted ${}^*X$, ${}^\perp X$, or ${}^\vee X$. Our notation is borrowed from \cite{pastro-street}, where $S$ is reminiscent of the (left) star operator on a $*$-autonomous category. Also, we wanted to employ the same type of notation as for functors precisely to emphasize the (contravariant) functorial nature of the process of taking (left) duals. }
together with a pair of arrows $\db:{\I}\to X\ot SX$, $\e:SX\ot X\to {\I}$, satisfying the relations 
\begin{equation}
%:left-dual
\label{left-dual}
\vcenter{\xymatrix@C=35pt{X\ar[r]^-{\db \ot 1} \ar@{=}[dr] & X\ot SX\ot X \ar[d]^{1\ot \e} \\ & X}}
\qquad 
\vcenter{\xymatrix@C=35pt{SX\ar[r]^-{1\ot \db} \ar@{=}[dr] & SX\ot X\ot SX \ar[d]^{\e\ot 1} \\ & X}}
\end{equation}
A monoidal category is called \emph{left autonomous} if each object has a left dual.

\begin{example}\label{ex:lin left dual}
Let $(R, L)$ be a linear functor between monoidal categories. Then $L\I$ is a left dual for $R\I$, with morphisms $\xymatrix@1{\I \ar[r]^{r_0} & R\I \ar[r]^-{\nlr} & R\I \ot L\I}$ and $\xymatrix@1{L\I \ot R\I \ar[r]^-{\nll} & L\I \ar[r]^{L_0} & \I}$.
\end{example}

Each object $X\in \C$ with left dual induces adjunctions $(-) \ot X \dashv (-)\ot SX :\C \to \C$ and $SX \ot (-) \dashv X \ot (-):\C \to \C$. Consequently, a left dual of an object, if it exists, is unique up to isomorphism.

Let $f:X\to Y$ an arrow between objects with left duals. Then the composite 
\begin{align}
%:transpose
\label{transpose}
\xymatrix{SY \ar[r]^-{1\ot \db} &SY \ot X\ot SX\ar[r]^{1\ot f\ot 1} &  SY\ot Y \ot SX\ar[r]^-{\e\ot 1} & SX}
\end{align}
is called the (left) dual arrow of $f$ (or the (left) transpose of $f$). We shall denote it by $Sf: SY \to SX$. Assuming a choice of duals in a left autonomous category $\C$, the assignments $X \mapsto SX, f\mapsto Sf$ extend functorially to a strong monoidal functor $S:\C\to \C^{\op, \cop}$. 

\bigskip
%========================================================%

A \emph{right dual} of an object $X$ in a monoidal category $\C$ is an object $S'X$, together with arrows $\db':{\I}\to S'X\ot X$, $\e':X\ot S'X\to {\I}$, satisfying the relations 
%:eq:rightdual
\begin{equation}\label{eq:rightdual}
\vcenter{\xymatrix@C=35pt{X\ar[r]^-{1\ot \db'_X} \ar@{=}[dr] & X\ot S'X\ot X \ar[d]^{\e'_X\ot 1} \\ & X}}
\qquad 
\vcenter{\xymatrix@C=35pt{S'X\ar[r]^-{\db'_X\ot 1} \ar@{=}[dr] & S'X\ot X\ot S'X \ar[d]^{1\ot \e'_X} \\ & X}}
\end{equation}
A monoidal category is called \emph{right autonomous} if each object has a right dual.

\begin{example}\label{ex:lin right dual}
Let $(R, L)$ be a linear functor between monoidal categories. Then $L\I$ is not only a left dual for $R\I$, as seen earlier, but also a right dual for $R\I$, with morphisms $\xymatrix@1{\I \ar[r]^{r_0} & R\I \ar[r]^-{\nrr} & L\I \ot R\I }$ and $ \xymatrix@1{R\I \ot L\I \ar[r]^-{\nrl} & L\I \ar[r]^{L_0} & \I}$. 
\end{example}

All the results stated above for left duals apply to right duals. In particular, once a choice of right duals is assumed, $S'$ becomes a strong monoidal functor $S':\C^{\op,\cop}\to \C$.%\footnote{The choice of $\op$-side is only for convenience.}  

\medskip

A category is called \emph{autonomous} if it is both left and right autonomous. %If a choice of left and right duals is assumed, then 
 Then $S$ and $S'$ form a contravariant pair of adjoint equivalences $S\dashv S':\C^{\op,\cop} \to \C$. We shall denote the (monoidal) unit and counit by $\alpha:X \to S'SX$, respectively $\beta:X \to SS'X$.

%========================================================%
%:sec:main
\section{Frobenius monoidal and linear functors on autonomous categories} \label{sec:main}

This section contains the results of this paper. In the first subsection, we show that there is a one-to-one correspondence between autonomous and Frobenius monoidal functors on autonomous categories. In the second one, we apply the previously results to see that for a linear functor between autonomous categories, one of its components is Frobenius monoidal if and only if the other one is, if and only if there is a natural isomorphism between them compatible with the four (co)strengths. 

%========================================================%

%:subsect:aut funct
\subsection{Autonomous and Frobenius monoidal functors}\label{subsect:aut funct}

We begin by giving the precise definition of what it means for a monoidal functor to preserve duals, that is, to be an autonomous functor. This notion has been introduced in~\cite{dms-da}, in the more general context of autonomous pseudomonoids in (autonomous) monoidal bicategories.

%:def:autonomous functor
\begin{definition}
Let $(F,f_2,f_0):\C \to \D$ a monoidal functor between left autonomous categories. We say that $F$ is \emph{left autonomous}\footnote{We shall see in the sequel that in case $\C$ and $\D$ are both left and right autonomous, $F$ being left autonomous is the same as being right autonomous, so it will be unambiguous to drop off the adjective "left/right" and simply call such a functor \emph{autonomous}.}
 if there is a natural isomorphism\footnote{Of course, one can define the lax version, by dropping the isomorphism restriction on $\ka$. But as this is of no interest for this paper, we have chosen to work from the beginning with the strong version.} $\ka: SFX\to FSX$ such that the following diagrams commute:
\begin{align}
\label{eq1:lax_pres_dual} \vcenter{
\xymatrix{{\I} \ar[rr]^{f_0} \ar[d]_{\db F} & &  F{\I} \ar[d]^{F\db} \\
FX \ot SFX \ar[r]^{1\ot \ka} & FX \ot FSX \ar[r]^{f_2} &F(X \ot SX) 
}} 
\\ 
\label{eq2:lax_pres_dual}
\vcenter{
\xymatrix{ SFX \ot FX \ar[d]_{\e F} \ar[r]^{\ka \ot 1} & FSX \ot FX \ar[r]^{f_2} & F(SX \ot X) \ar[d]^{F\e} \\
{\I} \ar[rr]^{f_0} && F{\I} }
}
\end{align}

Dually, a monoidal functor $F:\C\to \D$ between right autonomous categories is \emph{right autonomous} if there is a natural isomorphism $\lambda:S'F\to FS'$ such that the following diagrams commute:
\begin{align}
\label{eq1':lax_pres_dual} \vcenter{
\xymatrix{{\I} \ar[rr]^{f_0} \ar[d]_{\db' F} & &  F{\I} \ar[d]^{F\db'} \\
S'FX \ot FX \ar[r]^{\la \ot 1} & FS'X \ot FX \ar[r]^{f_2} &F(S'X \ot X) 
}}  
\\
\label{eq2':lax_pres_dual}
\vcenter{
\xymatrix{ FX \ot S'FX \ar[d]_{\e' F} \ar[r]^{1 \ot \la} & FX \ot FS'X \ar[r]^{f_2} & F(X \ot S'X) \ar[d]^{F\e'} \\
{\I} \ar[rr]^{f_0} && F{\I} }}
\end{align}

\end{definition}

\begin{remark}\label{FMF pres dual}

Frobenius monoidal functors are autonomous \cite{day-pastro:Frob}: given two categories $\C $ and $\D$, say left autonomous, and $F:\C \to \D$ a Frobenius monoidal functor, the (left) autonomy of $F$ is witnessed by an (iso)morphism $\ka:SF\to FS$ given by the following composite:\footnote{The inverse of $\ka$ is  easily checked to be the morphism below:
\[\xymatrix@1@C=20pt{FSX \ar[r]^-{1 \ot \db} & FSX \ot FX \ot SFX \ar[r]^{f_2 \ot 1} & F(SX \ot X) \ot SFX \ar[r]^-{F\e \ot 1} & F{\I} \ot SFX \ar[r]^-{F_0\ot 1} & SFX}
\]
} 
\begin{align*}
\xymatrix@C=15pt{SFX \ar[r]^-{1\ot f_0} & SFX \ot F{\I} \ar[r]^-{1\ot F\db} & SFX \ot F(X \ot SX) \ar[r]^-{1 \ot F_2} & SFX \ot FX \ot FSX \ar[r]^-{\e \ot 1} & FSX }
\end{align*}

As strong monoidal functors are particular cases of Frobenius monoidal ones, we thus recover the well-known result that strong monoidal functors on left/right autonomous categories are left/right autonomous~\cite{js:braided}. 
\end{remark}

Now consider $F:\C \to \D$ a left autonomous functor. Because $S$ is strong monoidal, $SF$ becomes a monoidal functor $\C\to \D^{\op,\cop}$, while $F^\op S$ is a comonoidal one. The natural transformation $\ka$ respects the \linebreak (co)monoidal structure of these functors:

\begin{proposition}\label{prop:colax-lax}

Let $F:\C \to \D$ be an autonomous monoidal functor between left autonomous categories. Then $\ka:SF \to F^\op S:\C \to \D^{\op, \cop}$ is a monoidal-comonoidal natural transformation.

\end{proposition}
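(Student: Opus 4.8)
The goal is to verify that $\ka:SF\to F^\op S$ satisfies the two coherence conditions \eqref{lax-colax} (in $\D^{\op,\cop}$), namely compatibility with the binary structure maps and with the units. Since we are working in $\D^{\op,\cop}$, the monoidal product of $SF$ is $(sf)_2 = S f_2 \cdot (\text{strong str. of }S)$ read with reversed variance, and the comonoidal structure of $F^\op S$ comes from $F_2$ transported along $S$; unwinding these, the unit condition becomes the assertion that $\ka$ followed by the comonoidal unit $F_0$-component equals the monoidal unit $f_0$-component, i.e. essentially diagram \eqref{eq2:lax_pres_dual} specialized to the unit object (together with the definition of the dual of $\I$), while the binary condition becomes a hexagon relating $\ka_{X\ot Y}$ to $\ka_X$ and $\ka_Y$ via $S f_2$ on one side and $F_2$ on the other.

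First I would write out $\ka$ explicitly using the formula from Remark~\ref{FMF pres dual}, namely the composite
\[
\xymatrix@C=13pt{SFX \ar[r]^-{1\ot f_0} & SFX \ot F{\I} \ar[r]^-{1\ot F\db} & SFX \ot F(X \ot SX) \ar[r]^-{1 \ot F_2} & SFX \ot FX \ot FSX \ar[r]^-{\e \ot 1} & FSX, }
\]
and correspondingly its inverse. The reason this is the right move is that the two coherence diagrams to be checked are then diagrams built entirely from the monoidal data $f_2,f_0$, the comonoidal data $F_2,F_0$, the Frobenius compatibilities \eqref{eq2:Frob}--\eqref{eq1:Frob}, and the triangle identities \eqref{left-dual} for the chosen duals. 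The plan is to paste together: (i) naturality squares for $\ka$, $f_2$, $F_2$; (ii) the monoidal/comonoidal functoriality squares (the $(M)$ squares); (iii) the Frobenius axioms to slide an $f_2$ past an $F_2$; and (iv) the snake equations to cancel the unit/counit pairs $\db,\e$ that appear from the definition of $\ka$ and from the strong-monoidal structure of $S$ (recall $S(X\ot Y)\cong SY\ot SX$ is itself built from $\db,\e$). Because $S$ is strong monoidal, I can equivalently phrase everything as a statement about $\ka$ composed with the canonical iso $S(X\ot Y)\cong SY\ot SX$, which keeps the bookkeeping manageable.

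I would handle the unit condition first, as it is short: it reduces, after substituting the formula for $\ka_{\I}$ and using $F_0 f_0 = 1$ (one of the four trivial Frobenius-coherence consequences), essentially to diagram \eqref{eq2:lax_pres_dual} at $X=\I$ plus a snake identity, so it is almost immediate. The binary condition is the substantive one. The strategy there is to start from $S(f_2)$ composed with the canonical associativity-type isomorphisms, replace each $\ka$ by its defining composite, and then \emph{move the $F_2$ coming from $\ka_{X\ot Y}$ across} using the Frobenius axiom \eqref{eq1:Frob} (resp. \eqref{eq2:Frob}) so that it meets the $f_2$ coming from the target comonoidal structure; the leftover unit–counit zig-zags then collapse by \eqref{left-dual}. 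This is exactly the kind of computation the authors say they relegate to Appendix~\ref{app:proofs}, so I expect the honest proof to be a large pasted diagram.

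The main obstacle will be purely organizational rather than conceptual: keeping track of the variance reversal in $\D^{\op,\cop}$ (so that what looks like $f_2$ on one side is composed on the "wrong" end), and correctly expanding the strong monoidal structure of $S:\C\to\C^{\op,\cop}$ — which is responsible for the iso $S(X\ot Y)\cong SY\ot SX$ — into its underlying $\db/\e$ description so that it interacts cleanly with the $\db/\e$ already present in $\ka$. Once those conventions are pinned down, every remaining square is either an $(N)$, an $(M)$, one of \eqref{eq2:Frob}--\eqref{eq1:Frob}, or a triangle identity \eqref{left-dual}, and the proof is a (lengthy but routine) diagram chase.
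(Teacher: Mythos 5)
There is a genuine gap: your proof uses structure that the proposition does not provide. In Proposition~\ref{prop:colax-lax}, $F$ is only a \emph{monoidal} functor equipped with an abstract autonomous structure, i.e.\ a natural isomorphism $\ka:SF\to F^\op S$ satisfying \eqref{eq1:lax_pres_dual} and \eqref{eq2:lax_pres_dual}; there is no comonoidal structure $(F_2,F_0)$ and no Frobenius axioms \eqref{eq2:Frob}, \eqref{eq1:Frob} to invoke. Your plan begins by substituting for $\ka$ the explicit composite of Remark~\ref{FMF pres dual}, which only exists when $F$ is already Frobenius monoidal, and it then relies on $F_2$, $F_0$ and the Frobenius compatibilities throughout (it also invokes $F_0\circ f_0=1$, which is not a consequence of the Frobenius axioms at all -- Frobenius monoidal functors out of the unit category are Frobenius algebras, where counit after unit need not be the identity). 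At best you would prove that the particular $\ka$ induced by a Frobenius structure satisfies \eqref{eq:lax-colax}, which is not the stated claim about an arbitrary autonomous structure on a merely monoidal $F$. Within the paper the argument is moreover circular: the comonoidal structure on $F$ is only \emph{constructed} in Theorem~\ref{thm:aut->frob}, via the isomorphism $\sigma$ of Remark~\ref{klst}, and the comonoidal-monoidality of $\sigma$ used there is exactly the content of Proposition~\ref{prop:colax-lax}.

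The paper's proof uses only the data actually at hand: the unit diagram follows from \eqref{eq1:lax_pres_dual}, the transpose characterization \eqref{eq2:dinat} and the unit constraint \eqref{lax-functor0}; for the binary diagram one first shows that \eqref{eq1:lax_pres_dual} and \eqref{eq2:lax_pres_dual} are equivalent to their parameterized forms \eqref{eq1:ka-db} and \eqref{eq2:ka-ev}, then establishes the auxiliary diagrams \eqref{ka-db-ot} and \eqref{ka-ev-ot} relating $\ka$ to the structure maps $s_2$, $s_0$ of $S$ (defined through \eqref{eq:s2db} and \eqref{eq:s2ev}), and finally pastes these together with naturality, monoidality of $f_2$ and the triangle identities \eqref{left-dual}. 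To repair your outline, drop the explicit Frobenius formula for $\ka$, work only from the two defining diagrams of autonomy, and carry out the chase with $f_2$, $f_0$, $s_2$, $s_0$ and the snake identities alone; the parts of your plan concerning the variance bookkeeping in $\D^{\op,\cop}$ and the $\db/\e$ description of $s_2$ are then sound and close in spirit to the paper's appendix computation.
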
 

\begin{proof}
That is, the following diagrams commute:
\begin{equation}
%:colax-lax
\label{eq:lax-colax}
\vcenter{
\xymatrix@C=25pt{SF{\I} \ar[rr]^{\ka} \ar[d]_{Sf_0} & & FS{\I}   & SF(Y \ot X)  \ar[d]_{Sf_{2}}  \ar[r]^{\ka} & FS(Y \ot X)   \\
S{\I} \ar[dr]_{s^{-1}_0} & & F{\I} \ar[u]_{Fs_0}  & S(FY \ot FX) \ar[d]_{s^{-1}_{2}} &  F(SX \ot SY)   \ar[u]_{Fs_2} \\
%\ar[d]_{FS^{-1}_{2}} 
& {\I} \ar[ur]_{f_0} &  &SFX\ot SFY     \ar[r]^{\ka\ot \ka } & FSX\ot FSY \ar[u]_{f_2} 
}} 
\end{equation} 
Due to the large diagrams involved, the proof was deferred to~\ref{proof}. 
\end{proof}

\begin{remark}
The above proposition gives the major technical result upon which the paper relies.  We shall see in the sequel (Proposition~\ref{prop:lax-colax}) that in case both dualities exist on categories $\C $ and $\D$, a monoidal functor $F$ endowed with an isomorphism $\ka:SF \to F^\op S$ is (left) autonomous if and only if $\ka$ is a monoidal-comonoidal natural transformation. 
\end{remark}

Let now $F:\C \to \D$ be just a monoidal functor between autonomous categories. Using the contravariant equivalence between the left and right duality functors, one can see that natural transformations $\ka:SF\to F^\op S$ are in one-to-one correspondence with natural transformations $\lambda:S'F^\op \to FS'$ \cite{kellydoctrinal}, as indicated %by the pastings 
below: 
\begin{align}
\label{eq:mate-of-ka} 
& \lambda : \xymatrix@C=32pt{ S'F^\op \ar[r]^-{S'F^\op \beta^{-1} }& S'FSS'\ar[r]^{S'\ka S'} & S'SFS'\ar[r]^-{S'F \alpha^{-1}} & FS' }
\\
\label{eq:mate-of-la} 
& \ka : \xymatrix@C=35pt{ SF \ar[r]^-{SF\alpha^{-1}} & SFS'S \ar[r]^{S\la S} & SS'F^\op S \ar[r]^-{\beta^{-1}F^\op S} & F^\op S}
\end{align}
Notice that $\ka$ is an isomorphism if and only if $\la$ is so. 

\begin{proposition}\label{rem:leftaut=rightaut}

A monoidal functor $F:\C\to \D$ between autonomous categories is left autonomous if and only if it is right autonomous. More in detail, $\ka$ satisfies~\eqref{eq1:lax_pres_dual} if and only if $\lambda$ given by~\eqref{eq:mate-of-ka} satisfies~\eqref{eq1':lax_pres_dual}; dually, $\ka$ satisfies~\eqref{eq2:lax_pres_dual} if and only if~\eqref{eq2':lax_pres_dual} holds for $\la$.

\end{proposition}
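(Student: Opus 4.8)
The plan is to prove the statement by a direct diagram chase, exploiting the fact that $\lambda$ and $\kappa$ are mates under the contravariant adjunction $S \dashv S' : \C^{\op,\cop} \to \C$, as recorded in~\eqref{eq:mate-of-ka} and~\eqref{eq:mate-of-la}. Since these two formulas are mutually inverse bijections between natural transformations $\kappa : SF \to F^\op S$ and $\lambda : S'F^\op \to FS'$, it suffices to prove one direction of each biconditional; the converse then follows by replaying the argument with the roles of $(S,\db,\e,\alpha)$ and $(S',\db',\e',\beta)$ interchanged, together with the triangle identities for $S \dashv S'$. So I would focus on: assuming $\kappa$ satisfies~\eqref{eq1:lax_pres_dual}, show $\lambda$ of~\eqref{eq:mate-of-ka} satisfies~\eqref{eq1':lax_pres_dual}; and assuming $\kappa$ satisfies~\eqref{eq2:lax_pres_dual}, show the same $\lambda$ satisfies~\eqref{eq2':lax_pres_dual}.

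First I would set up the translation dictionary between the left- and right-duality data. The key facts are: $S$ is strong monoidal $\C \to \C^{\op,\cop}$ and $S'$ is strong monoidal $\C^{\op,\cop} \to \C$, with $\alpha : X \to S'SX$ and $\beta : X \to SS'X$ the unit and counit; the units/counits $\db',\e'$ for the right dual $S'X$ of $X$ can be expressed in terms of $\db,\e$ for $SX$ transported along $\alpha$ and $\beta$, namely $\db'_X$ and $\e'_X$ are obtained from $\e_{SX}$ and $\db_{S'X}$ (roughly, a right dual of $X$ is a left dual of $X$ read in the opposite-and-reversed category, so $S' \cong S^{\op,\cop}$ up to the equivalence). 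Concretely I would rewrite diagram~\eqref{eq1':lax_pres_dual}, whose outer path involves $\db'F$, $\la \ot 1$, $f_2$, $F\db'$, by substituting the definition of $\lambda$ from~\eqref{eq:mate-of-ka} and the expression of $\db'$ in terms of $\db$, $\alpha$, $\beta$. After these substitutions the diagram should decompose into: (i) naturality squares for $\alpha$, $\beta$, $\kappa$; (ii) strong-monoidality squares for $S$ and $S'$ applied to $f_2$ (these are exactly the coherence conditions~\eqref{eq:lax-colax} would control, but here we only need the ones for the strong functors $S,S'$, which are automatic); (iii) one instance of~\eqref{eq1:lax_pres_dual} itself; and (iv) triangle identities~\eqref{left-dual}/\eqref{eq:rightdual} for the duals, plus the $S \dashv S'$ triangle identities $S'S(\alpha) \cdot \alpha_{S'S} = \mathrm{id}$-type relations. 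The second biconditional, relating~\eqref{eq2:lax_pres_dual} and~\eqref{eq2':lax_pres_dual}, is handled symmetrically, using $\e F$, $\kappa \ot 1$ on one side and $\e' F$, $1 \ot \lambda$ on the other.

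The main obstacle I anticipate is bookkeeping rather than conceptual: the diagrams are large (as the authors themselves note, deferring the companion Proposition~\ref{prop:colax-lax} to an appendix), and one must be scrupulous about which category — $\C$, $\C^{\op,\cop}$, $\D$, or $\D^{\op,\cop}$ — each arrow lives in, because $S$ and $S'$ reverse both the tensor order and the arrow direction, so a composite that reads left-to-right in $\C$ reads right-to-left after applying $S$. In particular, tracking the interaction of $\kappa$ (which goes $SF \to F^\op S$, i.e.\ is genuinely a morphism in $\D^{\op,\cop}$) with the honest morphisms $f_2, f_0$ of $\D$ requires care. I would organize the proof by first proving the purely formal lemma that mates under $S \dashv S'$ convert the coherence axiom~\eqref{eq1:lax_pres_dual} into~\eqref{eq1':lax_pres_dual} — this is a statement about any $2$-category with a contravariant dual adjunction and is the real content — and then observe that~\eqref{eq2:lax_pres_dual} $\leftrightarrow$~\eqref{eq2':lax_pres_dual} is the same lemma applied in the reversed setting. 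Because the diagrams are unwieldy, I would relegate the explicit pasting diagrams to the appendix alongside the proof of Proposition~\ref{prop:colax-lax}, stating here only the strategy and the mate correspondence, exactly as the paper does for its other heavy computations.
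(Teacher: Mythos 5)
Your proposal is correct and follows essentially the same route as the paper: the paper's proof (Appendix, diagram~\eqref{proof:leftaut=rightaut}) is exactly the direct diagram chase you describe, pasting the translation of $\db'$ through $\beta$ (relation~\eqref{d'-beta-d}), the mate relation between $\ka$ and $\la$ (via~\eqref{eq:ka-la}), naturality/monoidality squares, and one instance of~\eqref{eq1:lax_pres_dual}, with the remaining implications and the second biconditional obtained by duality and invertibility of $\ka$ and $\beta$. The decomposition you anticipate matches the cells actually used, so no further comment is needed.
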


\begin{proof} Again, this is deferred to~\ref{proof2}. 
\end{proof}

From the above proposition we see that we can talk without ambiguity about autonomous (monoidal) functors, without necessity to add the adjective "left" or "right", this being deduced from the context (i.e. whenever the categories involved are both left or both right autonomous).

\begin{proposition} \label{prop:lax-colax}

Let $F:\C\to \D$ be a monoidal functor between autonomous categories and $\ka:SF\to F^\op S$ a natural isomorphism, with $\la:S'F^\op \to FS'$ its mate, as in~\eqref{eq:mate-of-ka}. The following are equivalent:
\begin{enumerate}
\item $F$ is autonomous;
\item $\ka$ is a monoidal-comonoidal natural transformation;
\item $\la$ is a comonoidal-monoidal natural transformation. 
\end{enumerate}
\end{proposition}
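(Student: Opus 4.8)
\textbf{Proof plan for Proposition~\ref{prop:lax-colax}.}

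The plan is to prove the cycle $(1)\Leftrightarrow(2)$ and $(1)\Leftrightarrow(3)$, using the earlier propositions as the main engines. The implication $(1)\Rightarrow(2)$ is precisely Proposition~\ref{prop:colax-lax}: if $F$ is autonomous, then $\ka$ satisfies the defining diagrams~\eqref{eq1:lax_pres_dual}--\eqref{eq2:lax_pres_dual}, and Proposition~\ref{prop:colax-lax} already extracts from these that $\ka$ is monoidal-comonoidal. So the real content is the converse $(2)\Rightarrow(1)$, i.e. recovering the two coherence diagrams~\eqref{eq1:lax_pres_dual} and~\eqref{eq2:lax_pres_dual} from the fact that $\ka:SF\to F^{\op}S$ respects the (co)monoidal structures~\eqref{eq:lax-colax}, together with $\ka$ being invertible.

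For $(2)\Rightarrow(1)$ the strategy I would follow is to exploit that, on an autonomous category, the duality functor $S$ is not just strong monoidal but an equivalence, so a morphism is determined by its behavior under $S$ (and translation along the unit/counit $\alpha,\beta$). Concretely, I would consider diagram~\eqref{eq1:lax_pres_dual} — which is an equality of morphisms $\I\to F(X\ot SX)$ — and show it is equivalent, after applying $S$ and conjugating by the appropriate instances of the strong monoidal structure of $S$ and the isomorphisms $\alpha,\beta$, to one of the triangles in~\eqref{eq:lax-colax} (the unit triangle on the left, relating $\ka$, $Sf_0$, $Fs_0$, $f_0$). The point is that the counit/unit $\e,\db$ of a dual pair transpose, under $S$, into the unit/counit of the dual pair for $SX$, and $f_0$ transposes into (an $S$-image of) $f_0$ up to the comonoidal comparison $s_0$ for $S$; so the content of~\eqref{eq1:lax_pres_dual} is exactly the content of the unit triangle, read across the equivalence. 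Similarly~\eqref{eq2:lax_pres_dual} corresponds to the other (tensor) hexagon in~\eqref{eq:lax-colax}, using that $f_2$ transposes into $f_2$ up to $s_2$. Since $\ka$ is an isomorphism, one can move freely between $\ka$ and $\ka^{-1}$ and the translation is reversible, giving the ``if'' direction.

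For $(1)\Leftrightarrow(3)$ I would not redo any diagram chase: by Proposition~\ref{rem:leftaut=rightaut}, $F$ is left autonomous (via $\ka$) iff it is right autonomous (via its mate $\la$ from~\eqref{eq:mate-of-ka}); and by the mirror-image (right-autonomous) version of Proposition~\ref{prop:colax-lax}, applied now with the duality $S'$ in place of $S$, the right-autonomy diagrams~\eqref{eq1':lax_pres_dual}--\eqref{eq2':lax_pres_dual} for $\la$ are equivalent to $\la:S'F^{\op}\to FS'$ being comonoidal-monoidal. Thus $(3)\Leftrightarrow(\text{$F$ right autonomous})\Leftrightarrow(\text{$F$ autonomous})=(1)$. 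One small check is needed here: that the mate correspondence~\eqref{eq:mate-of-ka}--\eqref{eq:mate-of-la} is compatible with the respective (co)monoidal structures, i.e. that $\ka$ is monoidal-comonoidal iff its mate $\la$ is comonoidal-monoidal; this follows because the mate is built from $\ka$, the strong monoidal functors $S,S'$, and the monoidal units/counits $\alpha,\beta$, all of which are (co)monoidal morphisms, so the calculus of mates preserves the coherence squares~\eqref{lax-colax}.

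\textbf{Main obstacle.} The hard part will be the translation in $(2)\Rightarrow(1)$: carefully bookkeeping how the transpose operation~\eqref{transpose} interacts with the monoidal comparisons $s_0,s_2$ of $S$ and with $\alpha,\beta$, so that the two triangles/hexagons of~\eqref{eq:lax-colax} really do reassemble into~\eqref{eq1:lax_pres_dual} and~\eqref{eq2:lax_pres_dual} rather than into something weaker. This is a genuine but routine diagram chase on autonomous categories; given its size, I would state the correspondence precisely and defer the full pasting diagram to the appendix, in the same spirit as Propositions~\ref{prop:colax-lax} and~\ref{rem:leftaut=rightaut}.
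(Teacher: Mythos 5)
Your high-level architecture partly matches the paper: the implication (1)$\Rightarrow$(2) is, as you say, exactly Proposition~\ref{prop:colax-lax}, and the paper also treats (1)$\Leftrightarrow$(3) as ``follows similarly,'' in the spirit of your reduction via the mate correspondence and Proposition~\ref{rem:leftaut=rightaut} (your side remark that the mate calculus exchanges monoidal-comonoidal with comonoidal-monoidal is consonant with Remark~\ref{klst}). The problem is the converse (2)$\Rightarrow$(1), which is the whole content of the proposition, and there your plan rests on a claimed correspondence that is not correct. You assert that \eqref{eq1:lax_pres_dual} is, ``read across the equivalence $S$,'' exactly the unit triangle of \eqref{eq:lax-colax}, and that \eqref{eq2:lax_pres_dual} corresponds to the tensor hexagon. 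But the unit triangle is a single equation involving only the component $\ka_{\I}$ (together with $f_0$, $Sf_0$, $s_0$), whereas \eqref{eq1:lax_pres_dual} is a family of equations, one for every object $X$, in which $\db_{FX}$ and the component $\ka_X$ appear. Applying the duality equivalence $S$ and conjugating by $\alpha$, $\beta$ transports one fixed equation into another fixed equation; it does not create the dependence on a general $X$, and naturality of $\ka$ alone does not propagate a constraint on $\ka_{\I}$ to constraints on $\ka_X$ for $X\not\cong\I$ (for instance, on graded vector spaces one may rescale the components of a natural $\ka$ by a nontrivial character of the grading, keeping the unit triangle and naturality intact while violating \eqref{eq1:lax_pres_dual}). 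Similarly, the tensor hexagon only relates $\ka_{Y\ot X}$ to $\ka_X\ot\ka_Y$; it is a multiplicativity constraint and does not by itself contain the evaluation compatibility \eqref{eq2:lax_pres_dual}. So the ``routine diagram chase'' you defer would not close as described.

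The idea your plan is missing is the one the paper's proof actually uses: to recover the autonomy diagrams one must bring the second duality $S'$ in at the level of \emph{objects}, not merely as a conjugating equivalence. The paper specializes the tensor diagram of \eqref{eq:lax-colax} at $X=S'Y$ (respectively $Y=S'X$), so that the evaluation/coevaluation morphisms of the right duality become available inside the diagram, and extracts from this the right-autonomy conditions \eqref{eq1':lax_pres_dual} and \eqref{eq2':lax_pres_dual} for the mate $\la$; Proposition~\ref{rem:leftaut=rightaut} then converts right autonomy into autonomy. Note that this is where the hypothesis that the categories are \emph{autonomous} (and not just left autonomous) enters essentially. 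The same remark applies to your treatment of (3): reducing it to a mirror-image statement is fine, but the hard direction ($\la$ comonoidal-monoidal $\Rightarrow$ right autonomous) is of exactly the same nature and again needs this substitution device rather than the triangle/hexagon translation you propose.
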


\begin{proof} The implication \textit{(1)$\Longrightarrow$(2)} was proved in Proposition \ref{prop:colax-lax}. For its converse, assume first that~\eqref{eq:lax-colax} holds, and take $X=S'Y$. This proves~\eqref{eq1':lax_pres_dual}. Dually, making $Y=S'X$ in~\eqref{eq:lax-colax} proves~\eqref{eq2':lax_pres_dual}. Consequently, $F$ is autonoumous. 

The equivalence \textit{(1)$\Longleftrightarrow$(3)} follows similarly.%, using also Proposition~\cite{rem:leftaut=rightaut}.
\end{proof}

%However, this is not the case for lax autonomous functors. That is, a comonoidal functor can be lax left autonomous without also being lax right autonomous.

\begin{remark} \label{klst}
Let $F:\C \to \D$ a monoidal functor between autonomous categories and $\ka:S F\to F^\op S$ a natural transformation, with mate $\la:S'F^\op \to FS'$. Then the diagrams below commute:%, as $\ka$ and $\la$ are mates under the adjunction $S'\dashv S$:\footnote{Notice that $S'$ is both left and right adjoint to $S$, being part of a (contravariant) equivalence.}
\begin{equation} \label{eq:ka-la}
\vcenter{\xymatrix{ S'F^\op S \ar[r]^{S'\ka} \ar[d]_{\la S} & S'SF \ar[d]^{ \alpha^{-1} F}\\
FS'S \ar[r]^-{F \alpha^{-1} } & F \\
}}
\qquad 
\vcenter{\xymatrix{SFS' \ar[d]_{\ka S'} \ar[r]^{S\la}  & SS'F^\op \ar[d]^{\beta^{-1}F} \\
F^\op SS' \ar[r]^-{F^\op \beta^{-1}} & F^\op }} 
\end{equation}
Denote by $\sigma $ and $\tau$ the common composites in the above diagrams; that is, 
\begin{align}
%:eq:FtoSFS
\label{eq:FtoSFS}
& \sigma = \alpha^{-1}F \circ S'\ka  :S'F^\op S \to F:\C \to \D \\
& \tau =F^\op \beta^{-1}\circ \ka {S'} : SFS' \to F^\op:\C^{\op, \cop} \to \D^{\op, \cop}
\end{align}
Then $\sigma $ is a comonoidal-monoidal natural transformations if and only if $\ka$ is monoidal-comonoidal. Similarly, $\tau$ is monoidal-comonoidal if and only if $\la$ is comonoidal-monoidal. Additionally, each of $\sigma,\tau$ is an isomorphism if and only if the other is, if and only if $\ka$ or $\la$ are so. \end{remark}

%:thm:aut->frob
\begin{theorem} \label{thm:aut->frob}
Let $(F, f_2, f_0):\C\to \D$ be an autonomous monoidal functor between autonomous categories, with isomorphism $\ka:SF\to F^\op S$. Then there is a comonoidal structure $(F, F_2, F_0)$ on $F$ such that $(F, f_2, f_0, F_2, F_0):\C \to \D$ becomes a Frobenius monoidal functor. 
\end{theorem}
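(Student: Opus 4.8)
The plan is to construct the comonoidal structure on $F$ directly out of the autonomous datum $\ka$ (equivalently its mate $\la$, or the derived transformations $\sigma,\tau$ of Remark~\ref{klst}), and then to verify the comonoidal axioms together with the two Frobenius compatibility squares \eqref{eq2:Frob}--\eqref{eq1:Frob}. For the comonoidal counit, the natural choice is to define $F_0:F\I\to\I$ as the transpose under the self-duality $\I\dashv\I$ of the morphism $f_0:\I\to F\I$; concretely $F_0$ is obtained by applying $S$ (or using $\sigma$ at the object $\I$, since $S\I\cong\I$ canonically as $S$ is strong monoidal) and composing with the appropriate coherence isomorphisms. For the comonoidal comultiplication $F_2:F(X\ot Y)\to FX\ot FY$, the idea is to use the adjunction $(-)\ot Y\dashv (-)\ot SY$: a map $F(X\ot Y)\to FX\ot FY$ corresponds by adjunction to a map $F(X\ot Y)\ot SY\to FX$, and the latter is built from $\ka^{-1}$ (to turn $F SY$ into $SFY$), the monoidal constraint $f_2$, and the evaluation $\e$. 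Equivalently, one can write $F_2$ as the composite that first expands $FX\ot FY$ via $1\ot F\db$ on the $SY$-slot, applies $f_2$ twice, uses $\ka$, and contracts via $\e$ — this is exactly the pattern already used for $\ka$ itself in Remark~\ref{FMF pres dual}, read backwards. So the definitions should essentially be forced, and writing them down is the first step.

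Next I would verify that $(F,F_2,F_0)$ is a genuine comonoidal functor, i.e. coassociativity and counitality. Here I expect to lean heavily on Proposition~\ref{prop:colax-lax}: the fact that $\ka:SF\to F^{\op}S$ is a \emph{monoidal-comonoidal} natural transformation, i.e. satisfies \eqref{eq:lax-colax}, is precisely the compatibility of $\ka$ with the $f_2,f_0$ on one side and the strong monoidal structure $s_2,s_0$ of $S$ on the other. Since $F_2,F_0$ are defined by transposing $f_2,f_0$ along dualities, and $S$ is strong monoidal (so it sends the monoidal axioms for $(F,f_2,f_0)$ to comonoidal axioms for the transposed data), the comonoidal axioms for $F$ should fall out of the monoidal axioms for $F$ together with \eqref{eq:lax-colax} and the triangle identities \eqref{left-dual}, \eqref{eq:rightdual} and the adjoint-equivalence identities for $\alpha,\beta$. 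In practice this is a diagram chase combining the definition of $F_2$, the naturality of $\ka$ (and of $f_2$), and one application of a triangle identity to collapse a $\db$--$\e$ pair.

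Finally, the two Frobenius squares. I would prove \eqref{eq1:Frob} (and then \eqref{eq2:Frob} by a symmetric argument, or by invoking the left/right symmetry of Proposition~\ref{rem:leftaut=rightaut} and working with $\la$ instead of $\ka$ on the other side). The strategy is: substitute the definition of $F_2$ into both legs of \eqref{eq1:Frob}, push the resulting maps into a single large diagram, and reduce using (i) naturality of $f_2$, (ii) associativity of $f_2$ (the monoidal axiom), (iii) the coherence conditions \eqref{eq1:lax_pres_dual}--\eqref{eq2:lax_pres_dual} expressing that $\ka$ witnesses preservation of duals, and (iv) the triangle identities to cancel adjunction units/counits. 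The main obstacle I anticipate is exactly this last verification: the diagrams are genuinely large (the paper itself defers all such computations to an appendix), and one must be careful that the particular way $\ka$ interacts with $f_2$ via \eqref{eq1:lax_pres_dual}--\eqref{eq2:lax_pres_dual} is used in the right place — a naive chase will produce a diagram with several $\db,\e,\ka,\ka^{-1}$ instances that only collapses after applying the autonomy coherence and a triangle identity in the correct order. A cleaner route, which I would attempt first to avoid the mess, is to use the mate calculus: since by Remark~\ref{klst} the autonomy of $F$ is equivalently encoded by the transformations $\sigma:S'F^{\op}S\to F$ and $\tau:SFS'\to F^{\op}$, and $F_2,F_0$ can be expressed purely in terms of $\sigma$ (or $\tau$) and the strong monoidal structure of $S,S'$, the Frobenius axioms for $F$ should translate into (co)monoidal-naturality statements for $\sigma$ that are already available from Proposition~\ref{prop:lax-colax}(2)$\Leftrightarrow$(3) applied to $F$ and to $F$ precomposed/postcomposed with duality functors. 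If that reduction works it is short; if not, the direct diagram chase in the appendix style is the fallback.
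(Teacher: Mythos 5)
Your main line of attack (the ``cleaner route'') is in substance the paper's own proof: the comonoidal structure is obtained by transporting the comonoidal structure of $S'F^\op S$ along the comonoidal--monoidal isomorphism $\sigma$ of Remark~\ref{klst}, exactly as in~\eqref{monoidal-str0}--\eqref{monoidal-str2}, and the Frobenius square~\eqref{eq1:Frob} is then a manageable chase using only that $\sigma$ satisfies the squares~\eqref{lax-colax} together with coassociativity of the transported comultiplication (the paper's diagram~\eqref{last}). One simplification you should notice: with this definition the comonoidal axioms for $(F,F_2,F_0)$ are automatic (a comonoidal structure transported along a natural isomorphism is again comonoidal), so the separate verification of coassociativity/counitality that you budget for via Proposition~\ref{prop:colax-lax} and triangle identities is not needed.

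The genuine gap is in how you close the argument for the second Frobenius square. Your plan is to get~\eqref{eq2:Frob} ``by a symmetric argument'' or ``by working with $\la$ instead of $\ka$ on the other side''. But the comonoidal structure you have fixed is built asymmetrically from the \emph{left}-duality data ($\ka$, equivalently $\sigma$); the mirrored argument proves~\eqref{eq2:Frob} for the comonoidal structure built from the \emph{right}-duality data ($\la$, equivalently $\tau$), which is a priori a different $(F_2,F_0)$. To conclude that one and the same comonoidal structure satisfies both~\eqref{eq2:Frob} and~\eqref{eq1:Frob}, you must first prove that the $\sigma$-induced and $\tau$-induced structures coincide; this is a nontrivial coherence statement, and it is precisely what the paper establishes in the large diagram~\eqref{aut->frobMonoidal1=2} before invoking duality. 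Without that lemma (or, alternatively, a second direct chase establishing~\eqref{eq2:Frob} for the $\sigma$-structure itself, which you have not planned and which is not a mirror image of the first chase), the ``by symmetry'' step does not go through.
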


\begin{proof}
The comonoidal structure of $F$ can be obtained via the (iso)morphism $\sigma$ from the comonoidal structure of $S'F^\op S$; explicitly, $F_0:F{\I} \to {\I}$ and $F_{2, X, Y}:F(X \ot Y)\to FX \ot FY $ are the composites 
\begin{align} 
&\xymatrix{F{\I} \ar[r]^-{\sigma^{-1}} & S'F^\op S{\I} \ar[r]^{S'F^\op s_0} & S'F^\op {\I} \ar[r]^{S'f_0} & S'{\I} \ar[r]^{{s'}^{-1}_0}& {\I}}  \label{monoidal-str0} \\
&\vcenter{\xymatrix@C=10pt@R=10pt{F(X \ot Y) \ar[r]^-{\sigma^{-1}} & S'F^\op S(X \ot Y) \ar[r]^{S'F^\op s_2} & S'F^\op (SY \ot SX) \\
\ar[r]^-{S'f_2} & S'(F^\op SY \ot F^\op SX)
\ar[r]^-{{s'}^{-1}_2}& S'F^\op SX \ot S'F^\op SY \ar[r]^-{\sigma \ot \sigma} & FX \ot FY } }\label{monoidal-str2}
\end{align}
Alternatively, one can also obtain a comonoidal structure on $F$ using the isomorphism $\tau:SFS'\to F^\op $. There should be no surprise in the fact that  these two monoidal structures agree; see~\ref{proof3}, where subsequently we show that diagram~\eqref{eq1:Frob} commutes using the comonoidal structure obtained from $\sigma$. By duality, diagram~\eqref{eq2:Frob} will also commute using the comonoidal structure induced by $\tau$. 
\end{proof}

We can resume the results of this subsection as follows:

\begin{theorem}\label{cor:frob}
Let $(F, f_2, f_0):\C\to \D$ be a monoidal functor between autonomous categories. The following are equivalent:
\begin{enumerate}
\item There is a comonoidal structure $(F, F_2, F_0)$ on $F$ such that $F$ becomes a Frobenius monoidal functor.
\item $F$ is an autonomous functor.
\item There exists a \emph{monoidal-comonoidal} natural isomorphism $\ka: SF \to F^\op S:\C \to \D^{\op, \cop}$. 
\item There exists a \emph{comonoidal-monoidal} natural isomorphism $\la: S'F^\op  \to FS':\C^{\op, \cop}\to \D$.
\item There is a \emph{comonoidal-monoidal} isomorphism $\sigma: S'F^\op S\to F:\C \to \D$.
\item There is a \emph{monoidal-comonoidal} isomorphism $\tau: SFS'\to F^\op :\C^{\op, \cop} \to \D^{\op, \cop}$.
\end{enumerate}
\end{theorem}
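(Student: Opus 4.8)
The plan is to organize the six conditions around the pivotal equivalence already established, namely Proposition~\ref{prop:lax-colax} (items (1)--(3) of that proposition), and then bolt on the remaining characterizations. First I would record the equivalence \textit{(2)$\Longleftrightarrow$(3)}, since this is nothing but the combination of Proposition~\ref{prop:colax-lax} (the forward direction, ``autonomous $\Rightarrow$ $\ka$ is monoidal-comonoidal'') and Proposition~\ref{prop:lax-colax} (which gives the converse, and moreover shows $\ka$ can be taken to be a \emph{monoidal-comonoidal} isomorphism precisely when $F$ is autonomous). Next I would obtain \textit{(3)$\Longleftrightarrow$(4)}: by~\eqref{eq:mate-of-ka} and~\eqref{eq:mate-of-la} the natural transformations $\ka$ and $\la$ are mates, each is an isomorphism iff the other is, and Proposition~\ref{prop:lax-colax} already tells us $\ka$ monoidal-comonoidal $\iff$ $F$ autonomous $\iff$ $\la$ comonoidal-monoidal, so all three are simultaneously equivalent.

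Then I would handle \textit{(3)$\Longleftrightarrow$(5)} and \textit{(4)$\Longleftrightarrow$(6)} by invoking Remark~\ref{klst} more or less verbatim: the remark constructs $\sigma = \alpha^{-1}F\circ S'\ka$ and $\tau = F^{\op}\beta^{-1}\circ \ka S'$ and states that $\sigma$ is a comonoidal-monoidal natural transformation iff $\ka$ is monoidal-comonoidal, that $\tau$ is monoidal-comonoidal iff $\la$ is comonoidal-monoidal, and that $\sigma$ (resp.\ $\tau$) is an isomorphism iff $\ka$ (resp.\ $\la$) is. Combined with the chain above, this closes the loop among (2)--(6). The only remaining direction is to connect (1) with this cluster. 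The implication \textit{(2)$\Longrightarrow$(1)} is exactly the content of Theorem~\ref{thm:aut->frob}: given $F$ autonomous with $\ka$, one defines $F_0, F_2$ by the formulas~\eqref{monoidal-str0}--\eqref{monoidal-str2} (equivalently, transport the comonoidal structure of $S'F^{\op}S$ along $\sigma$), and the Frobenius axioms~\eqref{eq1:Frob}--\eqref{eq2:Frob} hold. For the converse \textit{(1)$\Longrightarrow$(2)}, I would cite Remark~\ref{FMF pres dual}: a Frobenius monoidal functor is autonomous, with the explicit isomorphism $\ka$ written out there built from $f_2, f_0, F_2, F_0$.

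With all these pieces in hand, the proof body is essentially a bookkeeping argument: establish the cycle $(1)\Rightarrow(2)\Rightarrow(3)\Rightarrow(4)$, together with $(3)\Rightarrow(5)$, $(4)\Rightarrow(6)$, and the return arrows $(5)\Rightarrow(3)$, $(6)\Rightarrow(4)$, $(3)\Rightarrow(2)$, $(2)\Rightarrow(1)$ — every one of which has been supplied by an earlier statement. I would present it compactly, e.g.\ ``\emph{(1)$\Leftrightarrow$(2)} is Remark~\ref{FMF pres dual} together with Theorem~\ref{thm:aut->frob}; \emph{(2)$\Leftrightarrow$(3)} is Propositions~\ref{prop:colax-lax} and~\ref{prop:lax-colax}; \emph{(3)$\Leftrightarrow$(4)} follows from~\eqref{eq:mate-of-ka}, \eqref{eq:mate-of-la} and Proposition~\ref{prop:lax-colax}; \emph{(3)$\Leftrightarrow$(5)} and \emph{(4)$\Leftrightarrow$(6)} are Remark~\ref{klst}.''

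I do not expect a genuine obstacle here, since the hard analytic work (the diagram chases in Proposition~\ref{prop:colax-lax} and Theorem~\ref{thm:aut->frob}) has already been deferred to the appendix and may be assumed. The only point requiring a little care is to make sure the \emph{strength} adjectives match up: we must check that the isomorphism produced in each direction carries the claimed variance decoration (monoidal-comonoidal versus comonoidal-monoidal), rather than merely being a natural isomorphism. But Proposition~\ref{prop:lax-colax} and Remark~\ref{klst} have been phrased precisely so as to track exactly these decorations, so the matching is immediate. A secondary cosmetic concern is to confirm that the comonoidal structure built in Theorem~\ref{thm:aut->frob} from $\sigma$ agrees with the one built from $\tau$ — but this is also asserted in the statement/proof of Theorem~\ref{thm:aut->frob} (``there should be no surprise\ldots'') and hence may be taken as given. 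So the write-up reduces to citing the relevant earlier results in the right order.
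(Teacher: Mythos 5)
Your proposal is correct and follows essentially the same route as the paper: Theorem~\ref{cor:frob} is stated there as a summary of the preceding results, its (implicit) proof being exactly the bookkeeping you describe — Remark~\ref{FMF pres dual} and Theorem~\ref{thm:aut->frob} for the equivalence with the Frobenius structure, Propositions~\ref{prop:colax-lax} and~\ref{prop:lax-colax} together with the mate correspondence~\eqref{eq:mate-of-ka}--\eqref{eq:mate-of-la} for items (2)--(4), and Remark~\ref{klst} for items (5)--(6).
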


Of course, a completely dual result holds for $F$ being a comonoidal functor. We leave the reader to fill-in the details.

%========================================================%
%:subsect:lin funct pres dual
\subsection{When linear functors are Frobenius monoidal} \label{subsect:lin funct pres dual}

As mentioned in Section~\ref{sect:lin funct mon cat}, monoidal categories are degenerate linearly distributive categories, with the two tensor products identified. Adding \emph{negations} leads to autonomous categories. Recall from~\cite{cockett-seely:lf} that a linear functors $(R,L)$ between (degenerate) linearly distributive categories with negations is completely determined by its monoidal part $R$ and by a coherent isomorphism between its "de Morgan duals" $SRS'\cong S'RS$, which will provide the comonoidal part $L$. We shall corroborate this with the results of the previous subsection, in order to see that that for a linear functor between autonomous categories, one component is Frobenius monoidal if and only if the other is, if and only if there is an isomorphism between them compatible with the four (co)strengths, if and only if there is a monoidal-comonoidal isomorphism between them.  

\bigskip

Consider thus $(R,L):\C \to \D$ a linear functor between \emph{left} autonomous categories. Then for each object $X$ in $\C$, it is easy to check that $RSX$ becomes a left dual to $LX$, with unit and counit 
\begin{align*}
&\mathfrak d{:}\xymatrix{{\I} \ar[r]^{r_0} & R{\I} \ar[r]^-{R\db} & R(X \ot SX) \ar[r]^-{\nrr} & LX \ot RSX} \\
&\mathfrak e {:} \xymatrix{ RSX \ot LX \ar[r]^{\nrl} & L(SX \ot X) \ar[r]^-{Le} & L{\I} \ar[r]^{L_0} & {\I}}
\end{align*}
hence it induces a natural isomorphism $\Omega:RS \to SL$ by
\begin{equation}
\label{def Omega}
\Omega: \xymatrix{RSX \ar[r]^-{1\ot \db} & RSX \ot LX \ot SLX \ar[r]^-{\mathfrak e \ot 1} & SLX}
\end{equation}
such that:
\begin{align}
& \vcenter{\xymatrix{ {\I}  \ar@{=}[d] \ar[r]^{r_0} & R{\I} \ar[r]^-{R\db} & R(X \ot SX) \ar[r]^{\nrr} & LX \ot RSX \ar[d]^{1\ot \Omega} \ar@{<-}`u[lll]`[lll]_{\mathfrak d}[lll]\\
{\I} \ar[rrr]_{\db} & & & LX \ot SLX }
}
\\
& \vcenter{
\xymatrix{ RSX \ot LX \ar`u[rrr]`[rrr]^{\mathfrak e}[rrr] \ar[r]^{\nrl} \ar[d]_-{\Omega\ot 1} & L(SX \ot X) \ar[r]^{L\e} & L{\I} \ar[r]^{L_0} & {\I} \ar@{=}[d]  \\ 
SLX \ot LX \ar[rrr]_{\e} & && {\I}  
}
}
\end{align}

\begin{lemma}
The isomorphism above is a comonoidal natural transformation $\Omega:R^\op S \to SL:\C \to \D^{\op, \cop}$ , in the sense that it satisfies
\[
\vcenter{\xymatrix{RSY \ot RSX \ar[d]_{r_2} \ar[r]^{\Omega \ot \Omega} & SLY \ot SLX \ar[d]^{s_2} \\
R(SY \ot SX) \ar[d]_{Rs_2} & S( LX \ot LY) \ar[d]^{SL_2} \\
RS(X \ot Y) \ar[r]^{\Omega} & SL(X \ot Y) 
}} \qquad 
\vcenter{
\xymatrix{
& {\I} \ar[dl]_{r_0} \ar[dr]^{s_0} & \\
R{\I} \ar[d]_{Rs_0} && S{\I} \ar[d]^{SL_0} \\
RS{\I} \ar[rr]^{\Omega} && SL{\I} }}
\]
\end{lemma}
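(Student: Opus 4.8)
The statement asserts that the natural isomorphism $\Omega:RS\to SL$ is comonoidal with respect to the monoidal functor $R^\op S:\C\to\D^{\op,\cop}$ (recall $S$ is strong monoidal $\C\to\C^{\op,\cop}$, $R$ monoidal, so $R^\op S$ is monoidal) and the monoidal functor $SL$ (here $L$ is comonoidal, so $L^\op$ is monoidal and $SL$ is monoidal). The plan is to verify the two displayed diagrams directly, exactly as one proves that a morphism between left duals respecting the evaluation/coevaluation is necessarily functorial and monoidal — i.e. by exploiting that $\Omega$ is, by construction, the canonical comparison isomorphism between the two chosen left duals $RSX$ and $SLX$ of $LX$, and that both $R^\op S$ and $SL$ carry monoidal structures built from the duality data. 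The key fact I would isolate first is a uniqueness lemma: any morphism $\theta$ between left duals $A$ and $A'$ of the same object $B$ such that $\e_A = \e_{A'}\circ(\theta\ot 1)$ (equivalently $\db_A$ transported via $\theta$ gives $\db_{A'}$) is unique, and such comparison isomorphisms compose and tensor correctly. This is precisely the content already recorded in the two commuting diagrams following the definition of $\Omega$ in the excerpt, which say $\Omega$ intertwines $(\mathfrak d,\mathfrak e)$ with $(\db,\e)$.

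Concretely, for the unit diagram I would argue that both composites $SL\I \leftarrow S\I \leftarrow \I \to R\I \to RS\I$ (the $s_0$-then-$SL_0$ leg and the $r_0$-then-$Rs_0$ leg followed by $\Omega$) are forced to agree because $\I$ has $\I$ as its own left dual and left duals are unique: $RS\I$ is a left dual of $L\I$ via $\mathfrak d,\mathfrak e$, while $S\I\cong\I$ is the canonical left dual of $\I$, and $SL_0$-type data plus the comonoidal unit $L_0$ identify $L\I$'s dual with $\I$. Unwinding the definition of $\Omega$ at $\I$ using the triangle identities and the definition of $\mathfrak d,\mathfrak e$, the two legs collapse to the same morphism; this is a short diagram chase using only the snake equations \eqref{left-dual} and the defining composites of $\mathfrak d,\mathfrak e,\Omega$. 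For the binary diagram I would do the analogous but longer chase: the outer boundary expresses that $\Omega_{X\ot Y}$, precomposed with $r_2$ and the strong monoidal structure $s_2$ of $S$ (which reindexes $S(X\ot Y)$ as $SY\ot SX$), equals the tensor $\Omega_X\ot\Omega_Y$ followed by $s_2$ on the target and $SL_2$ (coming from the comonoidal structure $L_2$ of $L$). The strategy is to replace each $\Omega$ by its definition \eqref{def Omega}, insert coevaluations $\db$ and evaluations $\e$ for $X$, $Y$ and $X\ot Y$, and repeatedly use: (a) the two commuting squares relating $\mathfrak e$ to $\e$ already established; (b) the linear-functor coherence axioms relating the costrengths $\nu^r_R,\nu^r_L$ to $r_2,L_2$ (these are exactly the axioms in the referenced appendix \ref{linear}); and (c) naturality and the snake identities. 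Equivalently — and this may be the cleaner route to write up — I would show both sides of the binary diagram are the unique comparison isomorphism $RS(X\ot Y)\to SL(X\ot Y)$ between the two evident left duals of $L(X\ot Y)$: the left-hand path exhibits $RS(X\ot Y)$ as such a dual via $\mathfrak d_{X\ot Y},\mathfrak e_{X\ot Y}$, while the right-hand path exhibits $SLX\ot SLY$ (reindexed) as a left dual of $LX\ot LY$, transported to $L(X\ot Y)$ via $L_2$; uniqueness of left duals then forces the square to commute.

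The main obstacle I anticipate is the binary (tensor-compatibility) square: it mixes three separate pieces of structure — the strong monoidal structure $s_2$ of the duality functor $S$ (with its contravariant reindexing $S(X\ot Y)\cong SY\ot SX$), the monoidal structure $r_2$ of $R$, and the comonoidal structure $L_2$ of $L$ — glued together by the costrength $\nu^r_R$ through $\Omega$, so the diagram chase is genuinely large and is the sort of computation the authors defer to an appendix elsewhere in the paper. The trick that makes it tractable is to never expand everything at once but to work dual-side: fix the object $L(X\ot Y)$, write down its coevaluation/evaluation in two ways, and use that a map of left duals compatible with evaluation is automatically compatible with coevaluation and is unique, reducing the whole square to the already-established compatibility of $\Omega$ with $\mathfrak d,\mathfrak e$ together with the single linear-functor coherence axiom that says $\nu^r_R$ is compatible with $r_2$ and $L_2$ on a threefold tensor (the axiom governing how $R(X\ot Y\ot SY\ot SX)$ decomposes). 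The unit square, by contrast, should be essentially immediate once $\Omega_\I$ is unwound.
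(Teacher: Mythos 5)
Your ``cleaner route'' is exactly the paper's proof: both displayed diagrams live between objects that are left duals (of $LX\ot LY$, $L(X\ot Y)$, $L\I$), and one checks commutativity by passing to the corresponding diagrams between the dualized objects, which collapse to trivially commuting diagrams involving only $L_2$ and $L_0$. Apart from minor imprecision (the legs of the binary square start at $RSY\ot RSX$, so you are really comparing transposes over $L_2$ rather than comparison isomorphisms between duals of one and the same object), your plan coincides with the paper's argument.
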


\begin{proof}

Both diagrams above are diagrams between objects with left duals; they commute if and only if the diagrams between the corresponding left duals do so; but the latter are 
\[
\xymatrix{LX \ot LY \ar@{=}[r] & LX \ot LY  && {\I} & \\
L(X \ot Y) \ar[u]^{L_2} \ar@{=}[r] &L(X \ot Y) \ar[u]_{L_2}  & L{\I} \ar@{=}[rr] \ar[ur]^{L_0} && L{\I} \ar[ul]_{L_0}
}
\] 
and obviously commute. 
\end{proof}

Similarly, one can show the existence of a natural isomorphism $\Psi: L^\op S\cong SR:\C \to \D^{\op, \cop}$ of monoidal functors. 

\bigskip

Assume now that both the domain and codomain categories are autonomous, therefore \emph{both right and left duals} exist. Then we obtain a comonoidal isomorphism 
\begin{align}\label{Omega}
L\overset{\alpha L}{\longrightarrow} S'SL \overset{S'\Omega}{\longrightarrow} S'R^\op S:\C \to \D
\end{align}
and a monoidal isomorphism 
\[
L^\op  \overset{L^\op \beta}{\to} L^\op SS' \overset{\Psi S'}{\to} SRS':\C^{\op, \cop} \to \D^{\op, \cop}
\]
Consequently, the structural comonoidal morphisms $L_2$ and $L_0$ can be explicitly described only in terms of $r_2, r_0$ and $\Omega$, respectively $\Psi$. And so do the corresponding costrenghts, %\footnote{The costrengths express that $R$, as any monoidal functor, is left and right lax closed~\cite{e-k:cc}. Notice that in the autonomous case, internal homs do exist, since $-\ot X \dashv -\ot SX$ and $X \ot - \dashv  S'X \ot -$. Similarly, the strengths correspond to the coclosed structure of the comonoidal functor $L$. }
 in the sense that:
\begin{align}
%:nrr=right-closed
\label{nrr=right-closed}
& \vcenter{\xymatrix{R(X \ot Y) \ar[d]_-{\db' \ot 1} \ar[r]^\nrr & LX \ot RY \ar[d]^-{\alpha L \ot 1} \\
S'RSX \ot RSX \ot R(X \ot Y) \ar[d]_{1 \ot r_2} & S'SLX \ot RY \ar[d]^{S'\Omega\ot 1} \\
S'RSX \ot R(SX \ot X \ot Y) \ar[r]^-{1 \ot R(\e \ot 1)} & S'RSX \ot RY }}
\\
%:nlr=left-closed
\label{nrl=left-closed}
& \vcenter{\xymatrix{R(X \ot Y) \ar[d]_-{1\ot \db} \ar[r]^\nlr & RX \ot LY \ar[d]^-{1 \ot L\beta} \\
R(X \ot Y) \ot RS'Y \ot SRS'Y \ar[d]_{r_2\ot 1 } & RX \ot LSS'Y \ar[d]^{1 \ot \Psi S'} \\
R(X \ot Y \ot S'Y) \ot SRS'Y \ar[r]^-{R(1 \ot \e')\ot 1 } & RX \ot SRS'Y 
}}
\end{align}
By duality, there are two more such diagrams involving the strengths $\nrl, \nll$ and the mates of the isomorphisms $\Psi$ and $\Omega$ with respect to the adjunction $S \dashv S'$. %  as the co-closed structures associated to the comonoidal functor $L$. 

\begin{proposition}\label{prop:when lin is frob}

Let $(R, L):\C \to \D$ a linear functor between autonomous categories. Then the following are equivalent:

\begin{enumerate}

\item There is a natural isomorphism $\omega:R\cong L$ compatible with the linear structure, in the sense that the following four conditions hold:
\begin{align}
%:omega
\label{omega}& \vcenter{
\xymatrix{RX \ot RY \ar[d]_{r_2} \ar[r]^{1\ot \omega} & RX \ot LY \ar[d]^{\nrl} \\
R(X \ot Y) \ar[r]^\omega & L(X \ot Y) }}
& \begin{cases}
\nrl \circ (1 \ot \omega ) = \omega \circ r_2 \\ %\omega is left R-linear
\nll\circ (\omega \ot 1) = \omega \circ r_2 \\%\omega is right R-linear
(1 \ot \omega) \circ  \nrr  = L_2 \circ \omega \\ % \omega is left L-colinear
(\omega \ot 1) \circ  \nlr= L_2  \circ \omega % \omega is right L-colinear
\end{cases}
\end{align}

\item $R$ is a Frobenius monoidal functor;

\item $R$ is autonomous;

\item $L$ is a Frobenius monoidal functor;

\item $L$ is autonomous;

\item There is a monoidal-comonoidal natural isomorphism $R \cong L$.

\end{enumerate}

\end{proposition}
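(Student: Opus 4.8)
The plan is to organise the six conditions around item~(6), using Theorem~\ref{cor:frob} (and its comonoidal dual) on each ``side'', the isomorphisms $\Omega$ and $\Psi$ constructed above to cross between the $R$-side and the $L$-side, and the (unlabelled) example in Section~\ref{sect:lin funct mon cat} — the observation that a linear functor with equal components $R=L$ and costrengths $\nu^r_R=\nu^l_R=L_2$, $\nu^r_L=\nu^l_L=r_2$ is the same thing as a Frobenius monoidal functor — for the passage between the linear data and the Frobenius data.

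The formal equivalences come first. Here $(2)\Longleftrightarrow(3)$ is Theorem~\ref{cor:frob} applied to the monoidal functor $R$, and $(4)\Longleftrightarrow(5)$ is the comonoidal dual of Theorem~\ref{cor:frob} applied to $L$. Next I would establish $(3)\Longleftrightarrow(6)$: by item~(5) of Theorem~\ref{cor:frob}, $R$ is autonomous precisely when there is a comonoidal-monoidal isomorphism $\sigma\colon S'R^\op S\to R$, and composing $\sigma$ with the comonoidal isomorphism $L\cong S'R^\op S$ of~\eqref{Omega} turns it into a comonoidal-monoidal isomorphism $L\to R$, equivalently a monoidal-comonoidal isomorphism $\omega\colon R\to L$; conversely every such $\omega$ arises in this way. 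Running the same argument with $\tau$ and the monoidal isomorphism $L^\op\cong SRS'$ yields $(5)\Longleftrightarrow(6)$. (That the $\omega$ obtained from $\sigma$ agrees with the one obtained from $\tau$ is the analogue of the corresponding point already met in the proof of Theorem~\ref{thm:aut->frob}.)

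It remains to tie $(6)$ to the concrete condition~(1). For $(1)\Longrightarrow(2)$ I would transport the linear functor $(R,L)$ along $\omega$: replacing the component $L$ everywhere by $R$ via $\omega$ produces a new linear functor with both components equal to $R$, with monoidal structure maps $r_2,r_0$ and carried-over comonoidal structure $R_2:=(\omega^{-1}\ot\omega^{-1})\,L_2\,\omega$, $R_0:=L_0\,\omega$, and the four equations in~\eqref{omega} say exactly that the four transported costrengths are $\nu^r_R=\nu^l_R=R_2$ and $\nu^r_L=\nu^l_L=r_2$; hence, by the example in Section~\ref{sect:lin funct mon cat}, the data $(R,r_2,r_0,R_2,R_0)$ is Frobenius monoidal (the symmetric transport along $\omega^{-1}$ gives $(1)\Longrightarrow(4)$, although this also follows from $(1)\Longrightarrow(3)\Longleftrightarrow(5)\Longleftrightarrow(4)$). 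Conversely, for $(3)\Longrightarrow(1)$ — which suffices since $(3)\Longleftrightarrow(6)$ — I would take the isomorphism $\omega$ built from $\sigma$ above and verify that it satisfies~\eqref{omega}, by feeding it into the explicit formulas for $\nu^r_R$ and $\nu^l_R$ in~\eqref{nrr=right-closed} and~\eqref{nrl=left-closed} together with the two companion formulas for $\nu^r_L$, $\nu^l_L$ obtained from them by $S\dashv S'$-duality.

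The main obstacle is precisely this last verification. Everything concerning (co)monoidal structures and the duality functors $S,S'$ above is formal and short, but checking the four compatibility conditions~\eqref{omega} for the isomorphism extracted from $\sigma$ (equivalently $\tau$) requires unwinding the defining diagrams of $\Omega$, $\Psi$, $\sigma$ and $\tau$ against the full list of coherence axioms of a linear functor recalled in~\ref{linear}, with repeated use of the strong monoidality of $S$ and $S'$ and of the triangle identities for $S\dashv S'$. This is a sizeable diagram chase, of the same nature as — and best deferred to an appendix alongside — the proofs of Propositions~\ref{prop:colax-lax} and~\ref{rem:leftaut=rightaut} and Theorem~\ref{thm:aut->frob}.
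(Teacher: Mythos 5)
Your proposal is correct and follows essentially the same route as the paper: the formal equivalences \textit{(2)}$\Leftrightarrow$\textit{(3)}, \textit{(4)}$\Leftrightarrow$\textit{(5)}, \textit{(3)}$\Leftrightarrow$\textit{(6)}, \textit{(5)}$\Leftrightarrow$\textit{(6)} via Theorem~\ref{cor:frob} and the isomorphisms~\eqref{Omega} and $\Psi$, then \textit{(1)}$\Rightarrow$\textit{(2)} by reading the four equations~\eqref{omega} as the collapse of the (co)strengths onto the (transported) comonoidal structure and $r_2$ (the paper just performs the corresponding chase directly via~\eqref{lf4}), and \textit{(3)}$\Rightarrow$\textit{(1)} by checking~\eqref{omega} for the $\omega$ extracted from $\sigma$ and~\eqref{Omega} by means of~\eqref{nrr=right-closed} and~\eqref{nrl=left-closed}. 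The verification you defer is exactly the diagram the paper exhibits for one of the four relations, the other three being obtained by passing to $\C^\op$, $\C^\cop$ and $\C^{\op,\cop}$.
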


\begin{proof}
The equivalence \textit{2.}$\Leftrightarrow$ \textit{3.} follows from Theorem~\ref{cor:frob}, while \textit{4.}$\Leftrightarrow$\textit{5.} can be obtained by duality. 

\textit{1.}$\Rightarrow $\textit{2.} First, notice that $R$ inherits a comonoidal structure from $L$ via the isomorphism $\omega$. Then its monoidal and comonoidal structures obey the Frobenius relations~\eqref{eq2:Frob},~\eqref{eq1:Frob}. For example, the commutativity of~\eqref{eq2:Frob} is proved below, while for the other one it follows similarly: 
\[
\xymatrix@C=15pt{
RX \ot R(Y\ot Z) \ar@{}[drrr]|{\eqref{lf4}} \ar[dr]^{1 \ot \nrr} \ar[d]_{1 \ot \omega} \ar[rrr]^{r_2} &&& R(X \ot Y \ot Z) \ar[dl]_{\nrr} \ar[d]^{\omega}  \\
RX \ot L(Y \ot Z) \ar[d]_{1 \ot L_2} \ar@{}[r]|{\eqref{omega}} & RX \ot LY \ot RZ \ar[dl]^{1 \ot 1 \ot \omega} \ar[r]^{\nrl \ot 1} & L(X\ot Y) \ot RZ \ar@{}[r]|{\eqref{omega}}  \ar[dr]_{1 \ot \omega} & L(X \ot Y \ot Z) \ar[d]^{L_2} \\
RX \ot LY \ot LZ \ar[d]_{1 \ot \omega^{-1}\ot \omega^{-1} } &&& L(X \ot Y) \ot LZ \ar[d]^{\omega^{-1} \ot \omega ^{-1} } \\
RX \ot RY \ot RZ \ar@{}[uurrr]|{\eqref{omega}}  \ar[rrr]^{r_2\ot 1} && & R(X \ot Y ) \ot RZ }
\]
Notice that only two of the relations~\eqref{omega} are needed to prove that $R$ is a Frobenius monoidal functor. 

\textit{1.}$\Rightarrow $\textit{4.} is proved similarly, using the other two relations from~\eqref{omega}. 

\textit{3.}$\Rightarrow$\textit{1.} The functor $R$ being autonomous, there is a comonoidal-monoidal isomorphism $\sigma:S'R^\op S\to R$~\eqref{eq:FtoSFS}, which by precomposition with the comonoidal isomorphism $L\overset{\alpha L}{\to} S'SL \overset{S'\Omega}{\to} S'R^\op S$ from~\eqref{Omega} produces a comonoidal-monoidal isomorphism $L\to R$, whose inverse we shall denote by $\omega$. It is enough to check only one of the relations~\eqref{omega}, the other three being obtained by passage to $\C^\op$, $\C^\cop$ and respectively $\C^{\op, \cop}$. For example, the third relation $(1 \ot \omega) \circ  \nrr  = L_2 \circ \omega$ follows from the commutative diagram below: 
\[
\xymatrix@C=7pt{R(X \ot Y) \ar@<+1.25pt>@{-}`r[rrrddd][rrrddd] \ar@<-1.25pt>@{-}`r[rrrddd][rrrddd] \ar[ddddd]_{\nrr} \ar[dr]^-{\db'\ot 1} && \\
& S'RSX \ot RSX \ot R(X\ot Y) \ar[dr]^{1 \ot r_2} &&\\
& & S'RSX \ot R(SX \ot X \ot Y) \ar[d]^{1 \ot R(\e \ot 1)} \ar@{}[dll]|{\eqref{nrr=right-closed}} &&\\
& & S'RSX \ot RY \ar[d]^{\sigma^{-1} \ot 1}  & R(X \ot Y) \ar[d]^{\omega} &&\\
&S'SLX \ot RY \ar[ur]^{S'\Omega\ot 1} & RX \ot RY \ar[ur]^{r_2} \ar[dr]^{\omega \ot \omega} & L(X \ot Y) \ar[d]^{L_2} &&\\
LX \ot RY \ar[ur]^{\alpha L \ot 1} \ar[urr]_{\omega^{-1}\ot 1} \ar[rrr]_{1 \ot \omega} & & & LX \ot LY &&
}
\]
where the bottom right triangle commutes since $\omega$ is a monoidal-comonoidal isomorphism, and the commutativity of the top right diagram is a consequence of the definition of $\sigma$ in~\eqref{eq:FtoSFS}, and of~\eqref{eq1:lax_pres_dual}. The implication \textit{5.}$\Rightarrow$\textit{1.} is proved similarly. 

Finally, the equivalences \textit{3.}$\Leftrightarrow$ \textit{6.} and \textit{5.}$\Leftrightarrow$ \textit{6.} are obtained using the isomorphism~\eqref{Omega} and  Theorem~\ref{cor:frob}. 
\end{proof}

%========================================================%

\begin{appendices}
%\appendix

%========================================================%

%:app:prelim
\section{Monoidal and linear functors}\label{app:prelim}

%========================================================%

%:app:monoidal
\subsection{Monoidal categories and functors} \label{app:monoidal}

In this appendix and the subsequent two ones, we recall the notions of monoidal functors, linear functors (between monoidal categories), and of autonomous categories. Because mainly all our results are proofs are based on hypotheses and proofs by commutative diagrams, we needed to fix notations and to put labels on all necessary equations and commutative diagrams.

\medskip

A monoidal functor $F:\C\to \D$ between monoidal categories is a functor endowed with a natural transformation $f_{2, X,Y}:FX \ot FY \to F(X\ot Y)$ and a morphism $f_0:{\I}\to F{\I}$ such that
\begin{eqnarray}
%:lax-functor0
\label{lax-functor0}& \vcenter{\xymatrix{ FX \ar@<-1.25pt>@{-}`r[dr][dr] \ar@<+1.25pt>@{-}`r[dr][dr] \ar[d]_{f_0 \ot 1} & \\
 F{\I} \ot FX \ar[r]^-{f_{2}} & FX}}
\quad
\vcenter{\xymatrix{FX \ar@<-1.25pt>@{-}`r[dr][dr]  \ar@<+1.25pt>@{-}`r[dr][dr] \ar[d]_{1\ot f_0} & \\
FX\ot F{\I} \ar[r]^-{f_{2}} & FX 
}}
\\
%:lax-functor2
\label{lax-functor2}& \vcenter{\xymatrix@C=36pt{FX\ot FY\ot FZ \ar[r]^{f_{2}\ot 1} \ar[d]_{1\ot f_{2}} & F(X\ot Y) \ot FZ \ar[d]^{f_{2}} \\
FX\ot F(Y\ot Z) \ar[r]^{f_{2}} & F(X\ot Y\ot Z)
}}
\end{eqnarray}
Dually, a functor $F:\C \to \D$ is comonoidal if there is a natural transformation $F_{2}:F(X\ot Y) \to FX \ot FY$ and a morphism $
F_0:F{\I} \to {\I}$ such that 
\begin{eqnarray}\label{colax_functor}
&\label{colax_functor0} \vcenter{
\xymatrix{FX \ar[r]^{F_{2}} \ar@<-1.25pt>@{-}`d[dr][dr] \ar@<+1.25pt>@{-}`d[dr][dr] & F{\I} \ot FX \ar[d]^{F_0 \ot 1} \\
&FX
}
} 
\quad 
\vcenter{ \xymatrix{FX \ar@<-1.25pt>@{-}`d[dr][dr] \ar@<+1.25pt>@{-}`d[dr][dr] \ar[r]^{F_{2}}  & FX \ot F{\I} \ar[d]^{1 \ot F_0} \\
& FX
}
} 
\\
& \label{colax_functor2} \vcenter{
\xymatrix@C=36pt{ F(X\ot Y \ot Z) \ar[d]_{F_{2}} \ar[r]^{F_{2}} & FX \ot F(Y\ot Z) \ar[d]^{1\ot F_{2}} \\
F(X\ot Y) \ot FZ \ar[r]^{F_{2}\ot 1} & FX\ot FY \ot FZ }
}
\end{eqnarray}

To emphasize the difference, the structural morphisms for monoidal functors will be denoted by small letters, while in case of comonoidal functors we shall employ capital letters.

A (co)monoidal functor is called strong monoidal is its structural morphisms are isomorphisms, and small letters will be employed to denote them. 

\medskip

A natural transformation $\alpha:F\to G:\C\to \D$ between monoidal functors is monoidal if it satisfies 
\begin{equation}\label{lax-nat}
\vcenter{
\xymatrix@C=38pt{FX \ot FY \ar[d]_{f_{2}} \ar[r]^{\alpha \ot \alpha} & GX\ot GY \ar[d]^{g_{2}} \\
F(X\ot Y) \ar[r]^{\alpha} & G(X\ot Y)} } 
\quad 
\vcenter{\xymatrix{ & {\I} \ar[dr]^{g_0} \ar[dl]_{f_0} & \\ 
F{\I} \ar[rr]^{\alpha} & & G{\I} }}  
\end{equation}

Dually, a comonoidal natural transformation $\alpha:F\to G:\C\to \D$ between comonoidal functors has to satisfy 
\begin{equation}
\vcenter{
\xymatrix@C=38pt{
F(X\ot Y) \ar[r]^{\alpha} \ar[d]_{F_{2}} & G(X\ot Y) \ar[d]^{G_{2}} \\
FX \ot FY \ar[r]^{\alpha \ot \alpha} & GX \ot GY}}
\quad 
\vcenter{
\xymatrix{F{\I} \ar[rr]^{\alpha} \ar[dr]_{F_0} &&G{\I} \ar[dl]^{G_0} \\
& {\I}& }}
\end{equation}

%========================================================% 

%:app:linear
\subsection{Linear functors between monoidal categories}\label{linear}

A \emph{linear functor} between monoidal categories $\C$ and $\D$ consists of a pair of functors $R, L:\C\to \D$ , such that $R$ is monoidal and $L$ is comonoidal, with structure maps 
\begin{align*} 
r_{2}:R X \ot R Y \to R (X\ot Y), \qquad & r_0:{\I}\to R {\I}\\
L_{2}:L(X \ot  Y) \to L X \ot L Y, \qquad & L_0:L {\I}\to {\I}
\end{align*} 
together with four natural transformations
\begin{align*} 
&\nu_R^r:R (X\ot Y)\to L X\ot R Y,  \qquad \nu_R^l:R (X\ot Y)\to R X \ot L Y \\
&\nu_L^r:R X \ot L Y \to L (X\ot Y), \qquad \nu_L^l: L X \ot R Y \to L (X\ot Y) 
\end{align*} 
subject to the several conditions listed below, grouped such that a relation of each group is illustrated by a commutative diagram, from which the other relations belonging to the same group can be easily obtained as follows: the passage $R/L$ corresponds to a move to $\C^\op=(\C^\op, \ot)$, while the passage $r/l$ is obtained for $\C^\mathsf{cop}=(\C, \ot^\mathsf{rev})$. Finally, both changes become simultaneously available in $\C^{\op, \mathsf{cop}}=(\C^\op, \ot^{\mathsf{rev}})$. 
\begin{align}
%:lf1 
\label{lf1}&% L is left/right R-module - unit % R is left/right comodule  - counit 
\quad \vcenter{
\xymatrix@C=55pt@R=35pt{
RX \ar[d]_{\nu^r_R} \ar@<-1.25pt>@{-}`r[dr][dr] \ar@<+1.25pt>@{-}`r[dr][dr] & \\
L{\I} \ot RX \ar[r]^-{L_0\ot 1} & RX 
}
}
& &
\begin{cases} 
(L_0\ot 1) \circ \nu^r_R =1 \\
(1\ot L_0) \circ \nu^l_R =1 \\
\nu^r_L \circ (r_0 \ot 1) = 1 \\
\nu^l_L \circ (1\ot r_0) = 1
\end{cases} 
\\
%:lf2
\label{lf2} & % L is left/right R-module - multiplication % R is left/right comodule  - comultiplication
\vcenter{
\xymatrix{
R(X\ot Y \ot Z) \ar[r]^{\nu^r_R} \ar[d]_{\nu^r_R} & LX \ot R(Y\ot Z) \ar[d]^{1\ot \nu^r_R} \\
L(X\ot Y) \ot RZ \ar[r]^{L_2\ot 1} & LX \ot LY \ot RZ 
}
} 
& &
\begin{cases}
(1\ot \nu^r_R) \circ \nu^r_R = (L_2 \ot 1) \circ \nu^r_R \\
(\nu^l_R\ot 1) \circ \nu^l_R = (1\ot L_2) \circ \nu^l_R \\
\nu^r_L \circ (1\ot \nu^r_L) = \nu^r_L \circ (r_2 \ot 1) \\
\nu^l_L \circ (\nu^l_L\ot 1) = \nu^l_L \circ (1\ot r_2 ) 
\end{cases}
\\
%:lf3
\label{lf3} & % R is L-bicomodule % L is R-bimodule - compatibility between left and right (co)actions
\vcenter{\xymatrix{
R(X\ot Y \ot Z) \ar[r]^{\nu^r_R} \ar[d]_{\nu^l_R} & LX \ot R(Y\ot Z) \ar[d]^{1\ot \nu^l_R} \\
R(X\ot Y) \ot LZ \ar[r]^{\nu^r_R \ot 1} & LX \ot RY \ot LZ}
}
& &
\begin{cases}
(1\ot \nlr) \circ \nrr = (\nrr \ot 1) \circ \nlr \\
\nrl \circ (1 \ot \nll) = \nll \circ (\nrl \ot 1) \\
\end{cases} 
\\
%:lf4 % compatibility between actions and coactions: left L-coaction R --> L * R is left R-linear, where the codomain inherits the structure of R-module from L, etc
\label{lf4} & 
\vcenter{
\xymatrix{RX \ot R(Y \ot Z) \ar[r]^{1\ot \nrr} \ar[d]_{r_2} & RX \ot LY \ot RZ \ar[d]^{\nrl \ot 1} \\
R(X \ot Y \ot Z) \ar[r]^{\nrr} & L(X\ot Y) \ot RZ} }
& &
\begin{cases}
(\nrl \ot 1) \circ (1 \ot \nrr) = \nrr \circ r_2 \\
(1 \ot \nll) \circ (\nlr\ot 1) = \nlr \circ r_2 \\
(1 \ot \nrl) \circ (\nrr \ot 1) = L_2 \circ \nrl \\
(\nll \ot 1) \circ (1 \ot \nlr) = L_2 \circ \nll 
\end{cases}
\\
%:lf5
\label{lf5}& % multiplication on R is left/right L-colinear % comultiplication on L is left/right R-linear
\vcenter{
\xymatrix{ R(X\ot Y) \ot RZ \ar[r]^{\nrr \ot 1} \ar[d]^{r_2} & LX \ot RY \ot RZ \ar[d]^{1 \ot r_2} \\
R(X \ot Y \ot Z) \ar[r]^{\nrr} & LX\ot R(Y \ot Z) 
}
}
&&
\begin{cases}
(1 \ot r_2 ) \circ (\nrr\ot 1) = \nrr \circ r_2 \\
(r_2 \ot 1) \circ (1\ot \nlr) = \nlr \circ r_2 \\
(\nrl \ot 1) \circ (1 \ot L_2) =L_2 \circ \nrl \\
(1 \ot \nll) \circ (L_2 \ot 1) =L_2 \circ \nll 
\end{cases}
\end{align}

%========================================================%
%:app:aut
\subsection{Autonomous categories}\label{app:aut} 

Let $\C$ be a monoidal category. A \emph{left dual} of an object $X$ in $\C$ consists of another object $SX$, together with a pair of arrows $\db:{\I}\to X\ot SX$, $\e:SX\ot X\to {\I}$, satisfying the relations \eqref{left-dual}.  
A monoidal category is called \emph{left autonomous} if each object has a left dual.

%:dinat
\begin{proposition}
Consider two objects $X,Y$ with left duals and two morphisms $f:X\to Y$, $g:SY\to SX$. Then $g=Sf$ as in~\eqref{transpose} if and only if one of the equivalent conditions below hold: 
\begin{align}
\label{eq2:dinat} 
& \vcenter{
\xymatrix{
{\I} 
\ar[r]^-{\db} 
\ar[d]_{\db} 
& 
X\ot SX 
\ar[d]^{f\ot 1} 
\\ 
Y\ot SY 
\ar[r]_{1\ot g} 
& 
Y\ot SX 
}
}
&
\vcenter{
\xymatrix{
SY \ot X 
\ar[r]^-{g\ot 1} 
\ar[d]_{1\ot f} 
& 
SX\ot X 
\ar[d]^{\e} 
\\ 
SY\ot Y 
\ar[r]_{\e} 
& 
{\I}
}
}
\end{align}
\end{proposition}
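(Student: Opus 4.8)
The statement to be proved characterizes when a morphism $g : SY \to SX$ equals the transpose $Sf$ of a given $f : X \to Y$, in terms of the two naturality-type squares~\eqref{eq2:dinat}. The plan is to prove a cycle of implications: the definition~\eqref{transpose} of $Sf$ implies the left square of~\eqref{eq2:dinat}; the left square implies the right square; and the right square implies $g = Sf$. (By symmetry the cycle could equally well be traversed in the other direction, but one loop suffices.) Throughout I will use the triangle identities~\eqref{left-dual} for $(X, SX, \db, \e)$ and for $(Y, SY, \db, \e)$, and I will suppress associators since the ambient category is treated as strict.

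\textbf{Step 1: $g = Sf$ implies the left square.} Substitute the defining composite~\eqref{transpose} for $g$ into $(1 \ot g) \circ \db : {\I} \to Y \ot SX$. This yields
\[
\xymatrix@C=18pt{
{\I} \ar[r]^-{\db} & Y \ot SY \ar[r]^-{1 \ot 1 \ot \db} & Y \ot SY \ot X \ot SX \ar[r]^-{1 \ot 1 \ot f \ot 1} & Y \ot SY \ot Y \ot SX \ar[r]^-{1 \ot \e \ot 1} & Y \ot SX .
}
\]
Now the trailing portion $Y \ot SY \ot Y \xrightarrow{1 \ot \e} Y$, precomposed along $\db : {\I} \to Y \ot SY$ applied in the first two tensor factors and $\db$-unit considerations on the third, is handled by the first triangle identity of~\eqref{left-dual} for $Y$: the subdiagram ${\I} \xrightarrow{\db} Y \ot SY$ followed by inserting $Y$ via $\db$ and collapsing via $\e$ is the identity on the $Y \ot SY$ strand. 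After this simplification the composite reduces to ${\I} \xrightarrow{\db} X \ot SX \xrightarrow{f \ot 1} Y \ot SX$, i.e. $(f \ot 1) \circ \db$, which is exactly the left square. This is a routine string-diagram / triangle-identity manipulation.

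\textbf{Step 2: left square implies right square, and right square implies $g = Sf$.} For the second of these — which I expect to be the cleanest — take the right square's hypothesis $\e \circ (g \ot 1) = \e \circ (1 \ot f) : SY \ot X \to {\I}$ and plug it into the definition~\eqref{transpose} of $Sf$: in $Sf = (\e \ot 1) \circ (1 \ot f \ot 1) \circ (1 \ot \db)$ the middle-and-right part $(\e \ot 1) \circ (1 \ot f \ot 1)$ acting on $SY \ot X \ot SX$ rewrites as $(\e \ot 1) \circ (g \ot 1 \ot 1)$, after which the $SY$-strand is untouched and we are left with $g$ composed with $(\e \ot 1) \circ (1 \ot \db) : SX \to SX$, which is the identity by the second triangle identity of~\eqref{left-dual} for $X$; hence $Sf = g$. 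For the passage from the left square to the right square, transport the identity $(f \ot 1) \circ \db = (1 \ot g) \circ \db$ by tensoring on the right with $X$, composing with $\e : SX \ot X \to {\I}$ appropriately placed, and using the triangle identity for $X$ on one side and for $Y$ on the other to collapse the inserted $\db$–$\e$ pairs; equivalently, one can observe that both squares are the two mate conditions for $g$ under the adjunction $(-) \ot X \dashv (-) \ot SX$ together with $SY \ot (-) \dashv Y \ot (-)$ mentioned in the text, so each is the transpose of the other across a triangle identity. I would present this as a single pasting diagram rather than a chain of equations.

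\textbf{Main obstacle.} None of the steps is deep; the only real work is bookkeeping. The main obstacle is purely presentational: writing the pasting diagrams so that the two triangle identities of~\eqref{left-dual} (one for $X$, one for $Y$) are visibly the reason each subdiagram collapses, without drowning in the four-fold tensor products $Y \ot SY \ot X \ot SX$ that appear transiently. I would therefore favour string-diagrammatic reasoning, or at most two medium-sized commuting diagrams labelled by $(N)$ (naturality of $\db, \e, f$) and by the triangle identities, rather than a long equational grind.
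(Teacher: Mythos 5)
Your argument is correct: the cycle $g=Sf\Rightarrow$ left square $\Rightarrow$ right square $\Rightarrow g=Sf$, carried out by interchange and the two triangle identities~\eqref{left-dual} (for $Y$ in the first and last steps, for $X$ in the middle one), establishes both the ``if and only if'' and the equivalence of the two squares in~\eqref{eq2:dinat}. The paper states this proposition without proof, treating it as the standard characterization of transposes/mates, and your proof is exactly that standard zig-zag argument, so there is nothing to flag.
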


For composable arrows $\xymatrix@1{X\ar[r]^f& Y\ar[r]^g& Z}$, between objects with left duals, one can easily check the relation $S(g\circ f)=Sf\circ Sg$. Also, $S 1_X=1_{SX}$ holds for each object $X$ having a left dual. Assuming a choice of duals in a left autonomous category $\C$, we obtain a contravariant functor $S$ on $\C$, such that:
%:s_2
\begin{proposition}
The functor $S:\C \to \C^{\op, \cop}$ is strong monoidal, with monoidal structure  $s_0:{\I} \to S{\I}$, $s_{2}:SX \ot SY \to S(Y \ot X)$ given by the unique (iso)morphisms making the diagrams below commute: 
\begin{align}
%:eq:s2db
&\label{eq:s2db}\vcenter{\xymatrix@C=35pt{{\I} 
\ar`d[drr][drr]^-{\db} \ar[r]^-{\db} & X\ot SX \ar[r]^-{1\ot \db\ot 1} & X\ot Y \ot SY \ot SX \ar[d]^{1\ot 1\ot s_2} \\
& & X \ot Y \ot S(X\ot Y) 
}} \\
%:eq:s2ev
&\label{eq:s2ev}\vcenter{\xymatrix@C=35pt{SY \ot SX \ot X \ot Y \ar[d]_{s_2\ot 1\ot 1} \ar[r]^-{1\ot \e \ot 1} & SY \ot Y \ar[r]^-{\e} & {\I} \\
S(X\ot Y) \ot X \ot Y \ar`r[urr]^-{\e}[urr] & }}
\end{align}
\end{proposition}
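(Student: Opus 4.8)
The plan is to deduce the whole statement from the \emph{uniqueness up to unique compatible isomorphism} of left duals. The fact I will use repeatedly is: if $(D,\db,\e)$ and $(D',\db',\e')$ are two left duals of one and the same object $Z$ of $\C$, then there is exactly one morphism $\phi\colon D\to D'$ with $\e'\circ(\phi\ot 1_Z)=\e$; this $\phi$ is automatically invertible, it is equally the unique morphism with $(1_Z\ot\phi)\circ\db=\db'$, and such comparison morphisms compose and preserve identities. This is precisely the argument behind the preceding Proposition, run with $f$ an identity but applied to arbitrary left duals rather than the chosen ones (see also~\cite{kelly-laplaza:coherence,freyd-yetter:coherence}).

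First I would exhibit the relevant auxiliary duals. The unit $\I$ is a left dual of itself, with $\db=1_{\I}\colon\I\to\I\ot\I$ and $\e=1_{\I}\colon\I\ot\I\to\I$, the triangle identities~\eqref{left-dual} being trivialities. Next, for objects $X,Y$ I claim $SX\ot SY$ is a left dual of $Y\ot X$, with coevaluation $\I\xrightarrow{\db}Y\ot SY\xrightarrow{1\ot\db\ot 1}Y\ot X\ot SX\ot SY$ and evaluation $SX\ot SY\ot Y\ot X\xrightarrow{1\ot\e\ot 1}SX\ot X\xrightarrow{\e}\I$; checking the two triangle identities~\eqref{left-dual} for this pair is a short diagram chase, each one reducing, after one use of the interchange law, to the triangle identities for $X$ and for $Y$ separately. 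This is the only place where an actual (but entirely routine) computation is needed.

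With these in hand, $s_0\colon\I\to S\I$ and $s_{2}\colon SX\ot SY\to S(Y\ot X)$ are \emph{defined} to be the comparison isomorphisms between the duals just exhibited and the chosen duals $S\I$ and $S(Y\ot X)$; existence, uniqueness and invertibility are then immediate, and the characterizing diagrams hold by construction, \eqref{eq:s2db} being the coevaluation-compatibility of $s_2$ and \eqref{eq:s2ev} its evaluation-compatibility (with the analogous, trivial, equations pinning down $s_0$). Naturality of $s_2$ in both variables follows the same way: for $f\colon X\to X'$ and $g\colon Y\to Y'$, both composites around the naturality square are morphisms $SX\ot SY\to S(Y'\ot X')$ compatible with the duality data over $g\ot f$, hence coincide by the preceding Proposition.

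It remains to verify the strong monoidal coherence~\eqref{lax-functor0} and~\eqref{lax-functor2}, now read in $\C^{\op,\cop}$ — which is where the reversed tensor, hence the order-flip in $s_2$, enters. But in each of those diagrams both composites are once more comparison isomorphisms between two left duals of a single object: between $S(X\ot Y\ot Z)$ and the iterated dual $SZ\ot SY\ot SX$ of $X\ot Y\ot Z$ for~\eqref{lax-functor2}, and between $S\I\ot SX$ (respectively $SX\ot S\I$) and $SX$ as left duals of $X$ for~\eqref{lax-functor0}; by the uniqueness clause they agree, so the diagrams commute. I do not expect a genuine obstacle here: the delicate points are purely bookkeeping — keeping the $\C\to\C^{\op,\cop}$ variances and the order-reversal in $s_2$ straight, and observing that each of~\eqref{eq:s2db} and~\eqref{eq:s2ev} already determines $s_2$ on its own, so that ``the unique (iso)morphism making the diagrams below commute'' is well posed. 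One could equivalently package everything via the parametrized adjunctions $(-)\ot Z\dashv(-)\ot SZ$: strong monoidality of $S$ then says that these compose and are unital compatibly with $\ot$, which is the same argument in other words.
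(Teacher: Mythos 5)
Your argument is correct and is exactly the classical one the paper implicitly relies on (it states this proposition without proof, citing the standard references): exhibit $\I$ as its own dual and $SX\ot SY$ as a left dual of $Y\ot X$ via the nested (co)evaluations, define $s_0$ and $s_2$ as the unique comparison isomorphisms — which is precisely what~\eqref{eq:s2db} and~\eqref{eq:s2ev} express — and get naturality and the monoidal coherence in $\C^{\op,\cop}$ from uniqueness of such comparisons. The only substantive computation, the triangle identities for the nested dual, is correctly identified and routine, so there is no gap worth flagging.
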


In the sequel, whenever we refer to a left autonomous category, we shall implicitly assume a choice of left duals, hence a contravariant left duality functor $S:\C \to \C^{\op, \cop}$. 
\bigskip

%========================================================%

Let now again consider $\C$ to be just a monoidal category. A \emph{right dual} of an object $X$ is an object $S'X$, together with arrows $\db':{\I}\to S'X\ot X$, $\e':X\ot S'X\to {\I}$, satisfying the relations~\eqref{eq:rightdual}. A monoidal category is called \emph{right autonomous} if each object has a right dual.

An autonomous category is both left and right autonomous. The contravariant pair of adjoint equivalences $S\dashv S':\C^{\op,\cop} \to \C$ has unit and counit given by the (monoidal) natural isomorphisms
\begin{align*}
&\alpha:\xymatrix@C=30pt{ X \ar[r]^-{\db' S\ot 1} & S'SX\ot SX \ot X \ar[r]^-{1 \ot \e} & S'SX}
\\
&\beta:\xymatrix@C=30pt{ X \ar[r]^-{1\ot \db S'} & X\ot S'X \ot SS'X \ar[r]^-{\e'\ot 1} & SS'X}
\end{align*}
Notice that we can relate $\db', \e'$ and $\db,\e$ as follows:
%:d'-beta-d
\begin{equation}\label{d'-beta-d}
\vcenter{\xymatrix{{\I} \ar[r]^-{\db'} \ar[dr]_{\db {S'} }& S'X \ot X \ar[d]^{1\ot \beta} \\
& S'X \ot SS'X }} 
\quad \mbox{ and } \quad
\vcenter{\xymatrix{X\ot S'X \ar[d]_{\beta \ot 1} \ar[r]^-{\e'} & {\I} \\
SS'X \ot S'X \ar[ur]_{\e {S'}} & }}
\end{equation} 
Also, by doctrinal adjunction \cite{kellydoctrinal}, the (strong) monoidal structure of $S'$ is the mate of the (strong) monoidal structure of $S$, in the sense that $s'_2$ and $s'_0$ are the pastings
\begin{align}
%:eq:S2doctrinalS'2
\label{eq:S2doctrinalS'2}
&\xymatrix@C=17pt{S'X\ot S'Y \ar[r]^-{\alpha} & S'S(S'X \ot S'Y) \ar[r]^-{S's_2} & S'(SS'Y \ot SS'X) \ar[r]^-{\beta \ot \beta} & S'(Y\ot X) } \\
&\xymatrix@C=17pt{{\I} \ar[r]^-\alpha & S'S{\I} \ar[r]^-{S's_0} & S'{\I}} 
\end{align}

%========================================================%
%:app:proofs
\section{Proofs}\label{app:proofs}

%========================================================%

\subsection{Proof of Proposition~\ref{prop:colax-lax}}\label{proof}

The commutativity of the first diagram, involving the unit object ${\I}$ and the corresponding morphism $f_0:\I \to F\I$, is shown below:
\[
\xymatrix@C=25pt{SF{\I} \ar[rrr]^{\ka} \ar[d]_{Sf_0} \ar[dr]^{1 \ot f_0} & & & FS{\I} \ar[dl]_{1 \ot f_0}  \ar@{=}[d]  \\
S{\I} \ar@{}[r]|-{\eqref{eq2:dinat}} \ar[dr]_{\e_{\I} = s^{-1}_0} 
& SF{\I} \ot F{\I} \ar@{}[ur]|{(M)} \ar[d]^{\e} \ar[r]^{\ka \ot 1} \ar@{}[dr]|{\eqref{eq1:lax_pres_dual}} 
& FS{\I} \ot F{\I} \ar[r]^-{f_2} \ar@<-1.3ex>@{}[ur]|{\eqref{lax-functor0}} 
& FS{\I} \\
& {\I} \ar[r]_{f_0}  & F{\I}  \ar[ur]_{Fs_0=F(\e_{\I}^{-1})}
}
\]

In order to prove that the second diagram also commutes, we need the following two lemmas:

\begin{lemma}
Let $F:\C \to \D$ a monoidal functor, together with a natural isomorphism $\ka:SF \to FS$. Then: 
\begin{enumerate}
\item Diagram~\eqref{eq1:lax_pres_dual} commutes for all $X\in \C$  if and only if the 
diagram below does so, for each pair of objects $X, Y \in \C$ :
\begin{align}
\label{eq1:ka-db}
\vcenter{\xymatrix{FY \ar`d[drr][drr]_{F(1 \ot \db)} \ar[r]^-{1 \ot \db}& FY \ot FX \ot SFX \ar[r]^{f_2 \ot \ka} & F(Y \ot X) \ot FSX \ar[d]^{f_2} \\
&& F(Y \ot X \ot SX) 
}}
\end{align}
\item Dually, diagram~\eqref{eq2:lax_pres_dual} commutes if and only if the next diagram also commutes:
\begin{align}
\label{eq2:ka-ev}
\vcenter{
\xymatrix{SFX \ot FX \ot FY \ar`d[drr][drr]_{\e \ot 1}  \ar[r]^{\ka\ot f_2} & FSX \ot F(X \ot Y) \ar[r]^{f_2} & F(SX \ot X \ot Y) \ar[d]^{F(\e \ot 1)} \\ && FY
}}
\end{align}

\end{enumerate}
\end{lemma}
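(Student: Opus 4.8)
The plan is to prove part (1) in detail; part (2) is entirely dual, obtained either by the mirror-image computation with the evaluation $\e$ in place of the coevaluation $\db$, or formally by passing to the opposite monoidal categories, under which left duality turns into right duality and~\eqref{eq2:lax_pres_dual},~\eqref{eq2:ka-ev} become~\eqref{eq1:lax_pres_dual},~\eqref{eq1:ka-db}.

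For the \emph{only if} direction of (1), assume~\eqref{eq1:lax_pres_dual} holds for every object and fix $X, Y \in \C$. I would start from the composite appearing in~\eqref{eq1:ka-db} and decompose $f_2 \ot \ka = (f_2 \ot 1) \circ (1 \ot 1 \ot \ka)$, so that the composite reads $f_2 \circ (f_2 \ot 1) \circ (1 \ot 1 \ot \ka) \circ (1_{FY} \ot \db)$. The associativity axiom~\eqref{lax-functor2} replaces $f_2 \circ (f_2 \ot 1)$ by $f_2 \circ (1 \ot f_2)$, and the composite becomes $f_2 \circ \bigl( 1_{FY} \ot \bigl[ f_2 \circ (1 \ot \ka) \circ \db_{FX} \bigr] \bigr)$, where $\db_{FX} \colon \I \to FX \ot SFX$. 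By hypothesis~\eqref{eq1:lax_pres_dual}, the bracketed arrow $\I \to F(X \ot SX)$ equals $F\db_X \circ f_0$. Substituting, the composite is $f_2 \circ (1_{FY} \ot F\db_X) \circ (1_{FY} \ot f_0)$; naturality of $f_2$ in its second argument rewrites this as $F(1_Y \ot \db_X) \circ f_2 \circ (1_{FY} \ot f_0)$, and the right-hand unit axiom~\eqref{lax-functor0} collapses $f_2 \circ (1_{FY} \ot f_0)$ to $1_{FY}$. The composite is therefore $F(1_Y \ot \db_X)$, which is precisely the commutativity of~\eqref{eq1:ka-db}.

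For the \emph{if} direction, I would simply specialize~\eqref{eq1:ka-db} to $Y = \I$ and precompose with $f_0 \colon \I \to F\I$. Then $(1_{F\I} \ot \db_{FX}) \circ f_0 = f_0 \ot \db_{FX}$, the composite $f_2 \circ (f_0 \ot 1_{FX})$ collapses to $1_{FX}$ by the left-hand unit axiom~\eqref{lax-functor0}, and, identifying $F(\I \ot X \ot SX)$ with $F(X \ot SX)$ and $F(1_\I \ot \db_X)$ with $F\db_X$, the specialized diagram becomes exactly~\eqref{eq1:lax_pres_dual}.

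The one point requiring care is the bookkeeping of the unit coherence isomorphisms suppressed by the strictness convention: one must keep track that it is $F$ applied to $1_Y \ot \db_X$ (rather than $\db_X$ itself) that appears in~\eqref{eq1:ka-db}, and that the identifications $\I \ot X \cong X$, $Y \ot \I \cong Y$, $F(Y \ot \I) \cong FY$ used above are precisely those furnished by~\eqref{lax-functor0}. No coherence property of $\ka$ beyond its naturality enters the argument, and I expect the diagrammatic proof deferred to the appendix to be a routine unpacking of these steps.
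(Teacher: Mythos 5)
Your argument is correct and is essentially the paper's own proof: the ``only if'' direction is exactly the equational unpacking of the appendix diagram (decompose $f_2\ot\ka$ by functoriality of $\ot$, apply~\eqref{lax-functor2}, insert~\eqref{eq1:lax_pres_dual}, then naturality of $f_2$ and~\eqref{lax-functor0}), and the converse is the paper's specialization $Y=\I$ with~\eqref{lax-functor0}, with part (2) handled by duality in both cases. Nothing to add.
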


\begin{proof}
First, the diagram below proves that~\eqref{eq1:lax_pres_dual} implies ~\eqref{eq1:ka-db}:
\[
\label{eq:ka-ev}
\xymatrix{FY \ar@{-}@<-1.25pt>`d[dddr][dddr] \ar@{-}@<+1.25pt>`d[dddr][dddr] \ar[dr]^{1 \ot f_0} 
\ar[r]^-{1 \ot \db}& FY \ot FX \ot SFX \ar[rr]^{f_2 \ot \ka} \ar[dr]^{1 \ot 1 \ot \ka} & & F(Y \ot X) \ot FSX \ar[ddd]^{f_2} \\ 
&FY \ot F{\I} \ar@{}[dl]|{\eqref{lax-functor0}} \ar[dd]^{f_2} \ar@{}[r]|{\eqref{eq1:lax_pres_dual}} \ar[dr]^{1 \ot F\db} &FY \ot FX \ot FSX \ar@{}[u]|{(M)} \ar[ur]^{f_2 \ot 1} \ar[d]^{1 \ot f_2} & \\ 
& &FY \ot F(X \ot SX) \ar@{}[ur]|{\eqref{lax-functor2}} \ar[dr]^{f_2} & \\
&FY \ar@{}[ur]|{(N)} \ar[rr]^{F(1 \ot \db)} & & F(Y \ot X \ot SX) 
}
\]
To show the converse, take $Y={\I}$ in~\eqref{eq1:ka-db} and use~\eqref{lax-functor0}. 

The equivalence between~\eqref{eq2:lax_pres_dual} and~\eqref{eq2:ka-ev} follows by duality. 
\end{proof}

\begin{lemma}
Let $F:\C \to \D$ a left autonomous functor and denote by $f^{(\ot) 4}_2:FX \ot FY \ot FZ \ot FW \to F(X \ot Y \ot Z \ot W)$ the iteration of $f_2$ to a tensor product of four factors. Then the following diagrams commute: 
\begin{align}
%:ka-db-ot
\label{ka-db-ot}
&\vcenter{\xymatrix{{\I} \ar[ddd]_{f_0} \ar[r]^-\db &FX \ot SFX \ar[r]^-{1 \ot \db \ot 1}  &FX \ot FY \ot SFY \ot SFX \ar[d]^{1 \ot 1 \ot \ka \ot \ka} \\
&& FX \ot FY \ot FSY \ot FSX \ar[d]^{f_2^{(\ot) 4}} \\
&  & F(X \ot Y \ot SY \ot SX) \ar[d]^{F(1 \ot 1\ot s_2)} &  \\
F{\I} \ar[rr]^{F\db} &&F(X \ot Y \ot S(X \ot Y))} }
\\
%:ka-ev-ot
\label{ka-ev-ot}
&\vcenter{\xymatrix{S(FX \ot FY) \ot FX \ot FY \ar[rr]^\e  \ar[d]_{s_2^{-1}\ot 1 \ot 1} & &  {\I}\ar[ddd]^{f_0}  \\
SFY \ot SFX \ot FX \ot FY \ar[d]_{\ka \ot \ka \ot 1\ot 1} & & \\
FSY \ot FSX \ot FX \ot FY \ar[d]_{f_2^{(\ot)4}} && \\
F(SY \ot SX \ot X \ot Y) \ar[r]^-{F(1 \ot \e \ot 1)} & F(SY \ot Y) \ar[r]^-{F\e} & F{\I}
}}
\end{align}
\end{lemma}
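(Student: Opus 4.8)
The statement to prove is the second lemma of the excerpt: the diagrams~\eqref{ka-db-ot} and~\eqref{ka-ev-ot} commute, assuming $F$ is a left autonomous monoidal functor. These are exactly the ``four-factor'' analogues of~\eqref{eq1:ka-db} and~\eqref{eq2:ka-ev}, in which the tensor product $Y\ot X$ (or $SX\ot X$) from the latter is split into $X\ot Y$ (resp.\ $SY\ot SX\ot X\ot Y$) using the strong monoidal structure $s_2$ of the duality functor $S$.

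The plan is to reduce~\eqref{ka-db-ot} to~\eqref{eq1:ka-db}, or rather to the already-established equivalence~\eqref{eq1:lax_pres_dual}$\Leftrightarrow$\eqref{eq1:ka-db}, together with the defining relation~\eqref{eq:s2db} for $s_2$. First I would rewrite the composite $FX\ot FY\ot SFY\ot SFX \xrightarrow{1\ot1\ot\ka\ot\ka} FX\ot FY\ot FSY\ot FSX \xrightarrow{f_2^{(\ot)4}} F(X\ot Y\ot SY\ot SX) \xrightarrow{F(1\ot1\ot s_2)} F(X\ot Y\ot S(X\ot Y))$ by factoring $f_2^{(\ot)4}$ through partial applications of $f_2$, so that the inner $SFY\ot SFX \xrightarrow{\ka\ot\ka} FSY\ot FSX \xrightarrow{f_2} F(SY\ot SX)$ part appears. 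The key intermediate step is that the composite $SF(X\ot Y) \xrightarrow{\ka} FS(X\ot Y) = FS(X\ot Y)$ equals $SF(X\ot Y) \xrightarrow{S f_2} S(FX\ot FY) \xrightarrow{s_2^{-1}} SFX\ot SFY \xrightarrow{\ka\ot\ka} FSX\ot FSY \xrightarrow{f_2} F(SX\ot SY) \xrightarrow{Fs_2} FS(X\ot Y)$ --- but this is precisely one half of Proposition~\ref{prop:colax-lax}, i.e.\ the right-hand square in~\eqref{eq:lax-colax}. \emph{However}, that proposition is logically what we are in the middle of proving, so I cannot invoke it; instead I must produce~\eqref{ka-db-ot} directly from~\eqref{eq1:lax_pres_dual} (equivalently~\eqref{eq1:ka-db}) and the $s_2$-defining diagrams. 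So the honest route is: apply~\eqref{eq1:ka-db} twice in succession --- once ``for $Y$'' to introduce $FY\ot SFY$ and once ``for $X$'' to introduce $FX\ot SFX$ --- pasting along a big diagram, and then use~\eqref{eq:s2db} to collapse the two consecutive coevaluations $\db$ into a single coevaluation for $X\ot Y$ with the $s_2$ coherence morphism appearing in the right place. Naturality of $f_2$ and of $\ka$, together with the monoidal-functor pentagon/associativity~\eqref{lax-functor2}, glue the pieces.

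Concretely, for~\eqref{ka-db-ot} I would build the commuting diagram whose outer boundary is the claimed square, starting from ${\I}\xrightarrow{\db}FX\ot SFX$, then inserting ${\I}\xrightarrow{\db}FX\ot FY\ot SFY\ot SFX$ obtained by applying the coevaluation-triangle identity~\eqref{left-dual} inside the middle, use~\eqref{eq1:ka-db} (with $Y\rightsquigarrow X\ot Y$-style instantiations, or rather the two-fold use described above) to transport the $\ka$'s across $f_2$, and finally invoke~\eqref{eq:s2db} to recognize the resulting composite ${\I}\to F(X\ot Y\ot S(X\ot Y))$ as $f_0$ followed by $F$ of the coevaluation $\db\colon{\I}\to (X\ot Y)\ot S(X\ot Y)$. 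All the small cells are labeled $(N)$, $(M)$, $(\ref{lax-functor0})$, $(\ref{lax-functor2})$, $(\ref{eq1:ka-db})$ or $(\ref{eq:s2db})$. Then~\eqref{ka-ev-ot} follows by the dual argument, replacing~\eqref{eq1:lax_pres_dual}, \eqref{eq1:ka-db}, \eqref{eq:s2db} by~\eqref{eq2:lax_pres_dual}, \eqref{eq2:ka-ev}, \eqref{eq:s2ev} respectively; I would simply state that and leave the (symmetric) diagram chase to the reader or to the appendix.

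The main obstacle is purely bookkeeping: the diagrams have four tensor factors and several layers of $f_2$, $\ka$, $s_2$, so drawing a single planar pasting diagram in which every inner region is manifestly one of the known commuting cells requires care in choosing the order of the moves (first push $\ka$ on the $Y$-slot across, then on the $X$-slot, then contract the $\db$'s). I expect no conceptual difficulty --- each move is licensed by a previously labeled relation --- but a faithful $\texttt{xymatrix}$ rendering is long, which is exactly why, in keeping with the paper's style, the full diagram is relegated to the appendix and only the strategy (two applications of~\eqref{eq1:ka-db} plus~\eqref{eq:s2db}, dualized for the second diagram) is recorded here.
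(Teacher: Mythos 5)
Your strategy is essentially the paper's own proof: the appendix establishes \eqref{ka-db-ot} by pasting one cell of \eqref{eq1:lax_pres_dual} (which is just the $Y={\I}$ instance of \eqref{eq1:ka-db}) with one cell of \eqref{eq1:ka-db}, the defining diagram \eqref{eq:s2db} for $s_2$ applied under $F$, and $(M)$/naturality cells, exactly the moves you describe, and then dismisses \eqref{ka-ev-ot} as the dual/analogous chase. Your remark that Proposition~\ref{prop:colax-lax} must not be invoked here is also consistent with the paper's logical order, so the proposal is correct and follows the same route.
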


\begin{proof}
The commutativity of~\eqref{ka-db-ot} is proved below. The second one follows analogously. 
\[
\xymatrix{{\I} \ar@{}[ddr]|{\eqref{eq1:lax_pres_dual}} \ar[r]^\db \ar[dd]_{f_0} & FX \ot SFX \ar[ddr]_{F(1 \ot \db) \ot 1} \ar[d]_{1 \ot \ka} \ar[r]^-{1 \ot \db \ot 1} & FX \ot FY \ot SFY \ot SFX \ar@{}[dl]|{\eqref{eq1:ka-db}} \ar[d]^{f_2 \ot \ka \ot 1} \\
&FX \ot FSX \ar@{}[dr]|{(M)} \ar[ddr]_{F(1 \ot \db) \ot 1} \ar[d]_{f_2} & F(X \ot Y)\ot FSY \ot SFX \ar[d]^{f_2 \ot 1}  & \\
F{\I} \ar[dd]^{F\db} \ar[r]^{F\db} & F(X \ot SX) \ar@{}[dr]|{(M)} \ar@{}[ddl]|{\eqref{eq:s2db}} \ar[ddr]_{F(1 \ot \db \ot 1)}& F(X \ot Y \ot SY) \ot SFX \ar[d]^{1 \ot \ka}  \\
 && F(X \ot Y \ot SY) \ot FSX \ar[d]^{f_2} \\
F(X \ot Y \ot S(X \ot Y)) \ar[rr]^{F(1 \ot 1 \ot s_2^{-1})}  && F(X \ot Y \ot SY \ot SX) 
}
\]
\end{proof}

Now we are ready to continue with the proof of Proposition~\ref{prop:colax-lax}.
\medskip

The second diagram of~\eqref{eq:lax-colax} is shown to commute by diagram~\eqref{largediagram} on page \pageref{largediagram}. To save space,  we have omitted the subscript indices and wrote $X{\cdot} Y $ instead of $X\ot Y$. Notice that the composite on the right side of~\eqref{largediagram} is the identity, because: 
\[
\resizebox{12cm}{!}{
\xymatrix@C=30pt@R=18pt{FS(Y \ot X) \ar[r]^-{1 \ot f_0} & FS(Y \ot X) \ot F{\I} \ar[r]^-{1 \ot F\db} \ar[d]^{f_2} \ar@{}[dr]|{(N)}  & FS(Y \ot X) \ot F( Y \ot X \ot S(Y \ot X)) \ar[d]^{f_2} \ar[r]^{1 \ot F( 1 \ot 1\ot s_2^{-1} )} & FS(Y \ot X) \ot F(Y \ot X \ot SX \ot SY) \ar[d]^{f_2} 
\\
&FS(Y \ot X) \ar[r]^-{F(1 \ot \db)} \ar@{=}[ddrr] \ar@{=}[ul] & F(S(Y \ot X) \ot Y \ot X \ot S(Y \ot X)) \ar@{}[d]|{\eqref{left-dual}}\ar@{=}[dr] \ar[r]^{F(1 \ot 1\ot 1\ot s_2^{-1})} \ar@{}[ur]|{(N)} & F(S(Y \ot X) \ot Y \ot X \ot SX \ot SY) \ar[d]^{F(1\ot 1\ot 1\ot s_2)} 
\\
&& &F(S(Y \ot X) \ot Y \ot X \ot S(Y \ot X)) \ar[d]^{F(\e \ot 1)} 
\\
&& & FS(Y \ot X)  \\
&& & 
}}
\]

%========================================================%

\subsection{Proof of Proposition~\ref{rem:leftaut=rightaut}}\label{proof2}
    
It is enough to check only one implication and one diagram, as the remaining will follow by duality. Assume that $F$ is left autonomous. Then the assertion follows from the commutative diagram~\eqref{proof:leftaut=rightaut} on page \pageref{proof:leftaut=rightaut}, using that both $\ka $ and $\beta$ are isomorphisms.

%========================================================%

\subsection{Proof of Theorem~\ref{thm:aut->frob}}\label{proof3}

First, we show that the two monoidal structures induced on $F$ agree. This follows from the commutative diagram~\eqref{aut->frobMonoidal1=2} on page~\pageref{aut->frobMonoidal1=2}. Next, it is enough to check only one of the diagrams~\eqref{eq1:Frob},~\eqref{eq2:Frob} in the definition of Frobenius monoidal functor, the second one following by duality. This is done in the diagram~\eqref{last} on page~\pageref{last}, using that $\sigma:S'F^\op S \to F$ is a comonoidal-monoidal (iso)morphism (Remark~\ref{klst}). We shall abbreviate by $(S'FS)_2$ one of the structural morphism giving the comonoidal structure on $S'FS$, that is, the following composite:
\[
S'FS(X \ot Y) \overset{S'Fs_2}{\longrightarrow } S'F(SY \ot SX) \overset{S'f_2}{\longrightarrow } S'(FSY \ot FSX)\overset{{s'_2}^{-1}}{\longrightarrow} S'FSX \ot S'FSY
\]  

%========================================================%

%:largediagram
\begin{landscape}
\begin{equation}\label{largediagram}
\resizebox{23cm}{!}{\
\xymatrix@C=45pt{
& \ar[dl]_{Sf_2} SF(Y {\cdot} X) \ar[r]^{\ka} \ar[d]^{(1{\cdot} \db)(1{\cdot} 1{\cdot} \db {\cdot} 1)} & FS(Y {\cdot} X) \ar[d]_{(1{\cdot} \db)(1{\cdot} 1{\cdot} \db {\cdot} 1)} \ar[dr]^{1 {\cdot} f_0} 
\\
S(FY {\cdot} FX) \ar@{}[r]|{(M)} \ar[d]_{(1{\cdot} \db)(1{\cdot} 1{\cdot} \db {\cdot} 1)} & SF(Y {\cdot} X) {\cdot} FY {\cdot} FX {\cdot} SFX {\cdot} SFY %\ar[r]^{\ka {\cdot} 1{\cdot} 1{\cdot} 1{\cdot} 1} 
\ar[dl]_{Sf_2 {\cdot} 1{\cdot} 1{\cdot} 1{\cdot} 1} \ar[d]^{1{\cdot} f_2{\cdot} 1{\cdot} 1}  & FS(Y {\cdot} X) {\cdot} FY {\cdot} FX {\cdot} SFX {\cdot} SFY \ar@{}[l]|{(M)}  \ar[d]^{1{\cdot} f_2{\cdot} 1{\cdot} 1} \ar[dr]^{1{\cdot} 1{\cdot} 1{\cdot} \ka{\cdot} \ka}  &  FS(Y {\cdot} X) {\cdot} F{\I} \ar[dr]^{1 {\cdot} F\db} 
\\
S(FY {\cdot} FX) {\cdot} FY {\cdot} FX {\cdot} SFX {\cdot} SFY \ar@{}[r]|{\eqref{eq2:dinat}} \ar[d]_{\e {\cdot} 1 {\cdot} 1} &SF(Y{\cdot} X) {\cdot} F(Y {\cdot} X) {\cdot} SFX {\cdot} SFY \ar@{}[dd]|{\eqref{eq1:ka-db}} \ar[r]^{\ka {\cdot} 1{\cdot} 1{\cdot} 1} \ar[dl]^{\e{\cdot} 1{\cdot} 1} & FS(Y{\cdot} X) {\cdot} F(Y {\cdot} X) {\cdot} SFX {\cdot} SFY \ar[d]^{f_2 {\cdot} 1{\cdot} 1} & FS(Y {\cdot} X) {\cdot} FY {\cdot} FX {\cdot} FSX {\cdot} FSY \ar[d]_{1{\cdot} f_2{\cdot} 1{\cdot} 1} & FS(Y {\cdot} X) {\cdot} F( Y {\cdot} X {\cdot} S(Y {\cdot} X)) \ar[ddd]^{1 {\cdot} F(1{\cdot} 1 {\cdot} s_2^{-1}) } \ar@{}[ddl]|{\eqref{ka-db-ot}}
\\
SFX {\cdot} SFY \ar[d]_{\ka {\cdot} \ka} \ar[rd]_{f_0 {\cdot} 1{\cdot} 1} & &F(S(Y {\cdot} X) {\cdot} Y {\cdot} X) {\cdot} SFX {\cdot} SFY  \ar@{}[dd]|{(M)} \ar[dl]^{F\e {\cdot} 1{\cdot} 1} &
FS(Y {\cdot} X) {\cdot} F(Y{\cdot} X) {\cdot} FSX {\cdot} FSY %\ar[dl]_{f_2 {\cdot} 1{\cdot} 1} 
\ar[d]^{1{\cdot} 1{\cdot} f_2} & 
\\
FSX {\cdot} FSY \ar[d]_{f_2} &F{\I} {\cdot} SFX {\cdot} SFY \ar[d]^{1{\cdot} \ka {\cdot} \ka} & 
%F(S(Y {\cdot} X) {\cdot} Y {\cdot} X) {\cdot} FSX {\cdot} FSY \ar[d]_{1 {\cdot} f_2}  
& FS(Y {\cdot} X) {\cdot} F(Y {\cdot} X) {\cdot} F(SX {\cdot} SY) \ar[dl]^{f_2 {\cdot} 1} %\ar@{}[l]|{(M)}
 \ar[dr]_{1{\cdot} f_2} 
\\
F(SX {\cdot} SY) \ar@{}[r]|{(M)} \ar[d]_{Fs_2} &F{\I} {\cdot} FSX {\cdot} FSY \ar[d]^{1{\cdot} f_2}& F(S(Y {\cdot} X) {\cdot} Y {\cdot} X) {\cdot} F(SX {\cdot} SY) \ar[d]_{1 {\cdot} Fs_2} \ar[dr]^{f_2}  && FS(Y {\cdot} X) {\cdot} F(Y {\cdot} X {\cdot} SX {\cdot} SY) \ar[dl]_{f_2} 
\\
FS( Y{\cdot} X) \ar@/_2.5ex/@{=} [ddr] \ar[dr]^{f_0{\cdot} 1}  & F{\I} {\cdot} F(SX {\cdot} SY) \ar[d]^{1{\cdot} Fs_2} & F(S(Y{\cdot} X) {\cdot} Y {\cdot} X) {\cdot} FS( Y {\cdot} X)  \ar[d]_{f_2}  & F(S(Y {\cdot} X) {\cdot} Y {\cdot} X {\cdot} S(Y {\cdot} X)) \ar[dl]^{F(1{\cdot} 1{\cdot} 1 {\cdot} s_2)}  \ar@{}[l]|{(M)} \ar@{}[uu]|{\eqref{lax-functor2}}& & \\
&F{\I}{\cdot} FS(Y {\cdot} X) \ar@{}[l]|{\eqref{lax-functor0}} \ar[d]^{f_2} & F( S(Y {\cdot} X) {\cdot} Y {\cdot} X {\cdot} S(Y {\cdot} X))\ar[dl]_{F(\e {\cdot} 1)}  & &  &\\
&FS(Y {\cdot} X) & & & & \\
}}
\end{equation}
\end{landscape}

%========================================================%

%:leftaut=rightaut
\begin{landscape}
\begin{equation}\label{proof:leftaut=rightaut}
\resizebox{22cm}{!}{
\xymatrix@C=40pt{
 & {\I} \ar[rrrr]^{f_0} \ar`l[dl]_{\db'F}[dl] \ar[d]^{\db S'F} \ar[dr]^{\db FS'} \ar@{}[dl]|{\eqref{d'-beta-d}} &  \ar@{}[drr]|{\eqref{eq1:lax_pres_dual}} && &  F{\I} \ar[dl]^{F\db S'} \ar`d[ddl]^{F\db'}[ddl] \\
S'FX \ot FX \ar[r]^{1 \ot \beta F} \ar`d[dr]_{\la \ot 1}[dr] & S'FX \ot SS'FX \ar[dr]^{\la \ot 1} \ar@{}[d]|{(M)} \ar@{}[r]|{\eqref{eq2:dinat}} & FS'X \ot SFS'X \ar@{}[dr]|{\eqref{eq:ka-la}}  \ar[d]_{1 \ot S \la} \ar[r]^{1 \ot \ka S'} &FS'X \ot FSS'X \ar[r]^{f_2} \ar[d]_{1 \ot F\beta^{-1} } \ar@{}[dr]|{(N)} & F(S'X \ot SS'X) \ar[d]^{F(1 \ot \beta^{-1})} \ar@{}[r]|-{\eqref{d'-beta-d}}& \\
& FS'X \ot FX \ar[r]^{1\ot \beta F} \ar@<.2ex>@{-}`d[rr]`[rr][rr] \ar@<-.5ex>@{-}`d[rr]`[rr][rr] & FS'X \ot SS'FX \ar[r]^{1 \ot \beta^{-1} F} & FS'X \ot FX  \ar[r]^{f_2} & F(S'X \ot X) & 
}}
\end{equation}
\end{landscape}

%========================================================%

\begin{landscape}
\begin{equation}\label{aut->frobMonoidal1=2}
\resizebox{22cm}{!}{
\xymatrix@C=45pt{
%:row1
&&&S'(FSY \p FSX) \ar@{}[ddr]|{\eqref{eq:lax-colax}} \ar[r]^{S'f_2} &S'F(SY \p SX) \ar[dr]^{S'Fs_2^{-1}}&&& \\
%:row2
&&S'FSX \p S'FSY \ar@{}[d]|{(N)} \ar[ur]^{s'_2} &&&S'FS(X\p Y) \ar[dr]^{S'\ka^{-1} }&&\\
%:row3
&S'SFX \p S'SFY \ar[ur]^{S'\ka \p S'\ka} \ar[r]^{s'_2} & S'(SFY \p SFX) \ar[uur]_{S'(\ka\p \ka)} &&&&S'SF(X\p Y) \ar[dr]^{\alpha^{-1} F} &\\
%:row4
FX \p FY \ar@{}[urr]|{\eqref{eq:S2doctrinalS'2}} \ar@{=}[d] \ar[ur]^{\alpha F \p \alpha F}  \ar[r]^{\alpha} & S'S(FX \p FY) \ar@<1.5ex>@{}[drrrrr]|{(N)} \ar[ur]_-{S's_2} \ar[urrrrr]^{S'Sf_2} &&&&&&F(X\p Y)\ar@{=}[d] \\
%:row4'
FX\p FY \ar[rrrrrrr]^{f_2} \ar@{}[drrrrrrr]|{(N)} \ar@{=}[d]&&&&&&&F(X\p Y) \ar@{=}[d]\\
%:row4''
FX\p FY \ar[dr]_{F\beta\p F\beta} &&& &&&F(SS'X \p SS'Y) \ar[r]^{F(\beta^{-1} \p \beta^{-1})}  &F(X\p Y) \ar@{}[dll]|{\eqref{eq:S2doctrinalS'2}} \\
%:row5
&FSS'X \p FSS'Y \ar[dr]_{\ka S'\p \ka S'} \ar[urrrrr]^{f_2} &&&&FS(S'Y \p S'X) \ar@{}[d]|{(N)} \ar[r]^{FS{s'_2}^{-1} } \ar[ur]^{Fs_2^{-1} }&FSS'(X\p Y) \ar[ur]_{F\beta^{-1}} &\\
%:row6
&&SFS'X \p SFS'Y \ar[dr]_{s_2} &&&SFS'(X\p Y) \ar[ur]_{\ka^{-1} S'}&&\\
%:row7
&&&S(FS'Y \p FS'X) \ar[r]^{Sf_2} &SF(S'Y \p S'X) \ar[ur]_{SF{s'_2}^{-1} } \ar[uur]^{\ka^{-1} S'} \ar@{}[uul]|{\eqref{eq:lax-colax}}&&&
}
}
\end{equation}
\end{landscape}

\begin{landscape}
\begin{equation}
\resizebox{22cm}{!}{
%:last
\label{last}
\xymatrix{& F(X\p Y)\p FZ \ar`l[dddd]`[dddd]`[rr]_{F_2\p 1} [dddrr] \ar[r]^{f_2} \ar[drr]_-{\sigma^{-1} \p \sigma^{-1}} 
\ar[d]_{\sigma^{-1} \p 1} & F(X \p Y\p Z) \ar`u`[rrrrr]^{F_2}`[ddd][dddrrrr] \ar@<1.2ex>@{}[dr]|{\eqref{lax-colax}} \ar[r]^{\sigma^{-1} } & 
S'FS(X \p Y \p Z) \ar[d]^{(S'FS)_2} \ar[r]^{(S'FS)_2} & S'FSX \p S'FS(Y \p Z) \ar[d]^{1 \p (S'FS)_2} \ar`r[dddrr][dddrr]^{\sigma \p \sigma} \ar@{}[dl]|{\eqref{colax_functor2}} && & \\
& S'FS(X \p Y ) \p FZ \ar[dd]_{(S'FS)_2\p 1} \ar[rr]_{1 \p \sigma^{-1}} & & S'FS(X \p Y) \p S'FSZ \ar[r]^{(S'FS)_2 \p 1} & S'FSX  \p S'FSY \p S'FSZ \ar[dr]^{\sigma\p \sigma\p \sigma} && \\
& &&&& FX \p FY \p FZ \ar@{=}[dll] \ar@{}[uu]|{(M),\eqref{lax-colax}} \ar[dr]^{1 \p f_2} \\
& S'FSX \p S'FSY \p FZ \ar[uurrr]^{1\p 1\p \sigma^{-1}} \ar@<.5ex>@{}[uurr]|{(M)} \ar[rr]^-{\sigma \p \sigma \p 1} && FX \p FY \p FZ \ar[rrr]^{1 \p f_2} && &FX \p F(Y \p Z)  \\
& &&&&&}
}
\end{equation}
\end{landscape}
%\begin{equation}
%\xymatrix{FX\p FY\p FZ \ar[r]^-{1\p \alpha F\p \alpha F} & FX\p S'SFY\p S'SFZ  }
%\end{equation}
%========================================================%

\end{appendices}

%========================================================%

\bibliographystyle{alpha}
\bibliography{BibFile1}

%========================================================%

\end{document}